\newtheorem{thm}{Theorem}[section]
\newtheorem{cor}[thm]{Corollary}
\newtheorem{lem}[thm]{Lemma}
\newtheorem{pro}[thm]{Proposition}
\newtheorem{defi}[thm]{Definition}
\newenvironment{ack}{\noindent{\bf Acknowledgments}}
\newcommand{\vol}{{\rm vol}}
\newcommand{\cs}{{\rm cs}}
\newcommand{\li}{{\rm Li}_2}
\newcommand{\modulo}{~~({\rm mod}~\pi^2)}
\begin{document}

\title{Conneted sum of representations of knot groups
}
\author{\sc Jinseok Cho}
\maketitle
\begin{abstract}
When two boundary-parabolic representations of knot groups are given, 
we introduce the connected sum of these representations
and show several natural properties including the unique factorization property.
Furthermore, the complex volume of the connected sum is the sum of each complex volumes modulo $i\pi^2$
and the twisted Alexander polynomial of the connected sum is
the product of each polynomials with normalization.

\end{abstract}

\section{Introduction}\label{sec1}

For any oriented knots $K_1$ and $K_2$, the connected sum $K_1\# K_2$ is well-defined and has many natural properties.
For example, any knot can be uniquely decomposed into prime knots. Also, the simplicial volumes 
$\vol(K_1)$, $\vol(K_2)$ and $\vol(K_1\#K_2)$ of $K_1$, $K_2$ and $K_1\#K_2$, respectively, satisfy
$\vol(K_1\# K_2)=\vol(K_1)+\vol(K_2)$. 
Furthermore, for the Alexander polynomials $\Delta_{K_1}$, $\Delta_{K_2}$ and $\Delta_{K_1\#K_2}$
of $K_1$, $K_2$ and $K_1\#K_2$, respectively, 
we have $\Delta_{K_1\# K_2}=\Delta_{K_1}\cdot \Delta_{K_2}$.

On the other hand, many important invariants are defined for a boundary-parabolic representation 
$\rho:\pi_1(K)\rightarrow {\rm PSL}(2,\mathbb{C})$
and its lift $\widetilde{\rho}:\pi_1(K)\rightarrow {\rm SL}(2,\mathbb{C})$ of the knot group $\pi_1(K)$,
where {{\it the knot group} is the fundamental group of the knot complement $\mathbb{S}^3\backslash K$ and} 
the {\it boundary-parabolic}\footnote{Boundary-parabolic representation is also called {\it parabolic representation} in many other texts.} means any meridian loop of the boundary-torus maps to a parabolic element in ${\rm PSL}(2,\mathbb{C})$
under $\rho$.
For example, the complex volume $\vol(\rho)+i\,\cs(\rho)$ and the twisted Alexander polynomial $\Delta_{K,\widetilde{\rho}}$
are some of the important invariants.

For two boundary-parabolic representations $\rho_1:\pi_1(K_1)\rightarrow {\rm PSL}(2,\mathbb{C})$ and
$\rho_2:\pi_1(K_2)\rightarrow {\rm PSL}(2,\mathbb{C})$, we will define the connected sum of $\rho_1$ and $\rho_2$
\begin{equation*}
\rho_1\#\rho_2 :\pi_1(K_1\# K_2)\rightarrow {\rm PSL}(2,\mathbb{C})
\end{equation*} 
in Section \ref{sec2}. 
{\color{red}(Note: After the publication of this article, serious errors were found. 
The author wanted to preserve the content of the publication, so he added the errata in the appendix.)} 
Then this definition satisfies the unique factorization property;
for any oriented knot $K=K_1\#\ldots \# K_g$ and any boundary-parabolic representation $\rho:\pi_1(K)\rightarrow {\rm PSL}(2,\mathbb{C})$,
there exist unique boundary-parabolic representations
\begin{equation*}
\rho_j:\pi_1(K_j)\rightarrow {\rm PSL}(2,\mathbb{C})~(j=1,\ldots,g)
\end{equation*}
satisfying $\rho=\rho_1\#\ldots\#\rho_g$ up to conjugate.
(If two same knots $K_j$ and $K_k$ appear in $K$, then the indices of $\rho_j$ and $\rho_k$ can be exchanged.)

Using this definition, we will show the following additivity of complex volumes
\begin{equation}\label{add}
\vol(\rho_1\#\rho_2)+i\,\cs(\rho_1\#\rho_2)\equiv(\vol(\rho_1)+i\,\cs(\rho_1))+(\vol(\rho_2)+i\,\cs(\rho_2))~~({\rm mod}~i\pi^2),
\end{equation}
in Section \ref{sec3}. The author believes (\ref{add}) was already known to some experts because
the knot complement $\mathbb{S}^3\backslash(K_1\#K_2\cup\{\text{two points}\})$ is obtained by gluing $\mathbb{S}^3\backslash (K_1\cup\{\text{two points}\})$
and $\mathbb{S}^3\backslash (K_2\cup\{\text{two points}\})$ along $\mathbb{T}^2\backslash\{\text{two points}\}$, a torus minus two points.\footnote{The gluing map here is topologically unique because it is obtained by gluing two pairs of two vertex-oriented ideal triangles.
} However, the proof in Section \ref{sec3} will be combinatorial and very simple.
Furthermore, while proving (\ref{add}), we will show the solutions of the hyperbolicity equations $\mathcal{I}_1$ and $\mathcal{I}_2$,
which correspond to the five-term triangulations of $\mathbb{S}^3\backslash (K_1\cup\{\text{two points}\})$ and $\mathbb{S}^3\backslash (K_2\cup\{\text{two points}\})$, respectively,
are determined by the solution of $\mathcal{I}$, which corresponds to the triangulation of $\mathbb{S}^3\backslash(K_1\#K_2\cup\{\text{two points}\})$. (See Lemma \ref{lem3}.)
This is not a usual situation because, in general, if we glue two manifolds, then the set of the hyperbolicity equations changes,
and even small change on the equations induces radical change on the solutions.
Therefore, the solution of the glued manifold usually cannot detect the solutions of the original two manifolds.
However, it works for our case in Section \ref{sec3} because we will use combinatorial method.

In Section \ref{sec4}, we will show
the twisted Alexander polynomial $\Delta_{K_1\#K_2,\widetilde{\rho_1}\#\widetilde{\rho_2}}$ is
the product of $\Delta_{K_1,\widetilde{\rho_1}}$ and $\Delta_{K_2,\widetilde{\rho_2}}$ with normalization.
Finally, Section \ref{sec5} will discuss an example $\rho_1\#\rho_2:\pi_1(3_1\#4_1)\rightarrow{\rm PSL}(2,\mathbb{C})$
and its lift $\widetilde\rho_1\#\widetilde\rho_2:\pi_1(3_1\#4_1)\rightarrow{\rm SL}(2,\mathbb{C})$.

Although we restrict our attention to boundary-parabolic representations for simplicity, 
under certain condition,
all results in Section \ref{sec2} and \ref{sec4} 
are still true for general representations. It will be discussed briefly later.

Note that all representations in this article are defined up to conjugate.
We follow the definition of the complex volume of a representation $\rho$ in \cite{Zickert09}
and that of the twisted Alexander polynomial in Section 2 of \cite{Morifuji08}.

\section{Definition and the unique factorization}\label{sec2}
\subsection{Writinger presentation and arc-coloring}

For a fixed oriented knot diagram\footnote{
We assume $D$ has at least one crossing.} $D$ of a knot $K$, let $\alpha_1,\ldots,\alpha_n$ be the arcs of $D$. 
These arcs can be regarded as the meridian loops of the boundary-torus, which is expressed by small arrows in Figure \ref{pic01}. 
Then Wirtinger presentation gives a presentation of the knot group
\begin{equation}\label{Wpre}
  \pi_1(K)=<\alpha_1,\ldots,\alpha_n \,;\, r_1,\ldots,r_{n-1}>,
\end{equation}
where the relations $r_1,\ldots,r_n$ are defined in Figure \ref{pic02}.
(We can remove one relation in $\{r_1,\ldots,r_n\}$ because it can be obtained by all the others.)

\begin{figure}[h]
\centering
  \includegraphics[scale=0.5]{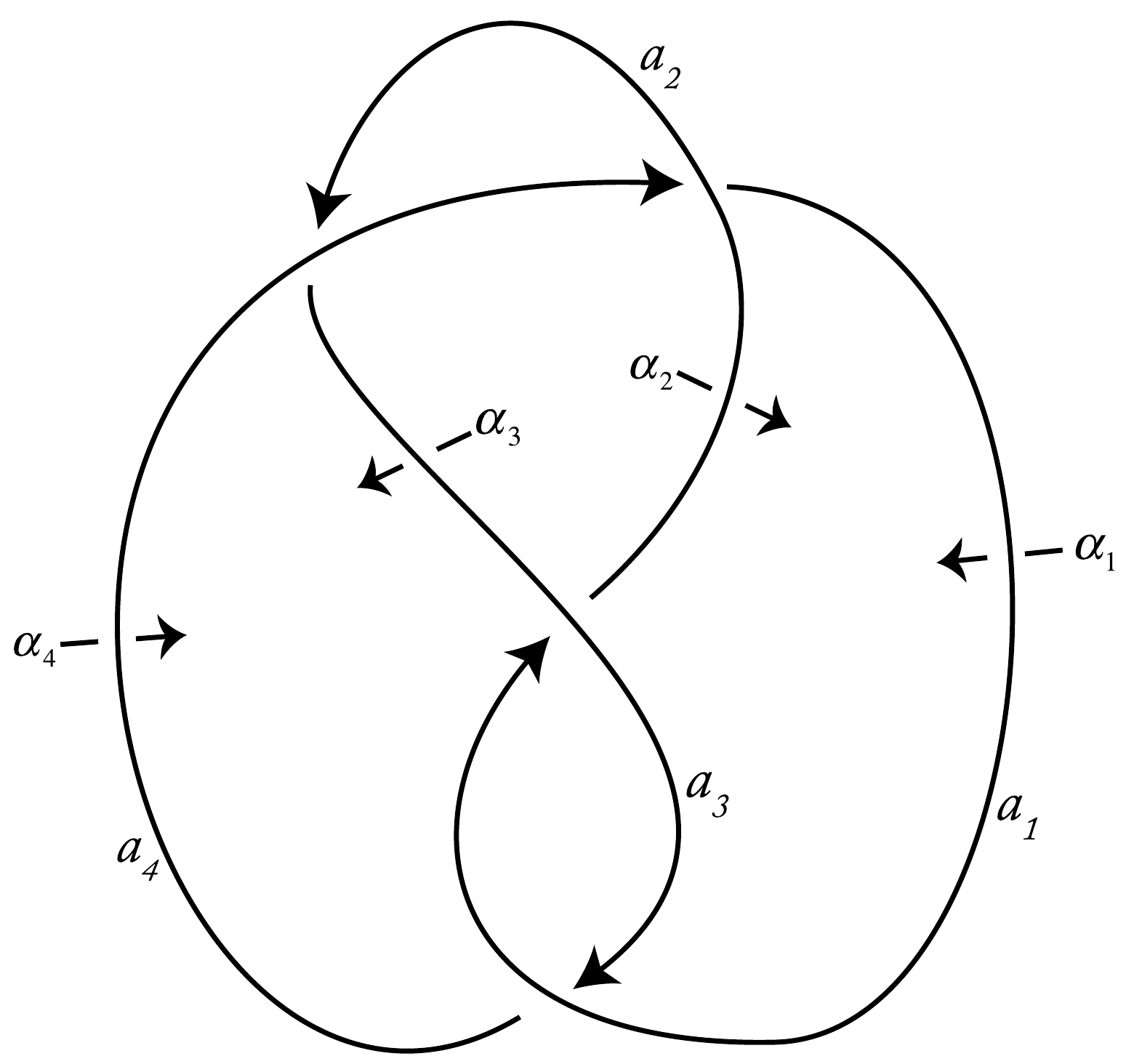}
  \caption{Knot diagram with arcs $\alpha_1,\ldots,\alpha_n$ and arc-colors $a_1,\ldots,a_n$}\label{pic01}
\end{figure}

\begin{figure}[h]
\centering
\subfigure[$r_l : \alpha_{l+1}=\alpha_k \alpha_l \alpha_k^{-1}$]{
  \includegraphics[scale=0.4]{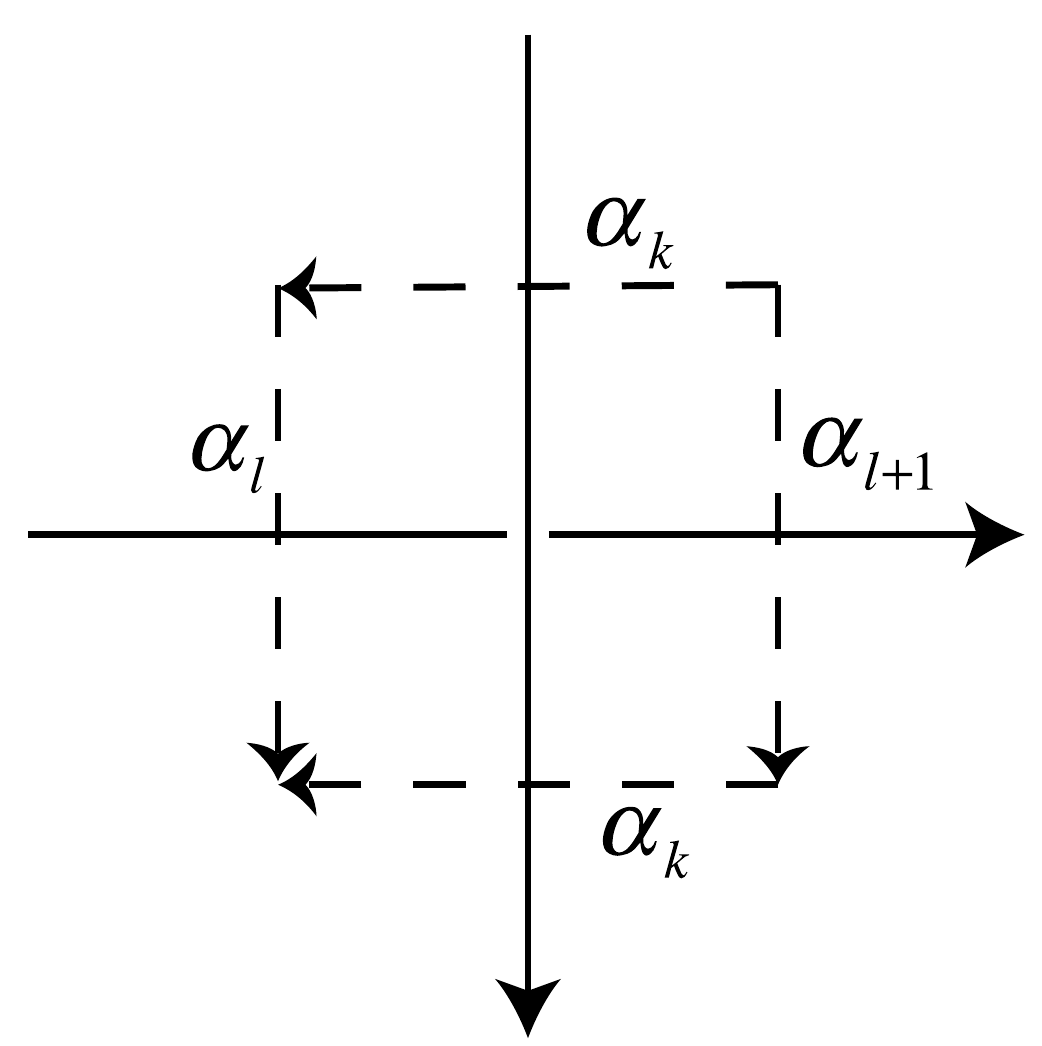}}\hspace{1cm}
  \subfigure[$r_l : \alpha_l=\alpha_k \alpha_{l+1}\alpha_k^{-1}$]{
  \includegraphics[scale=0.4]{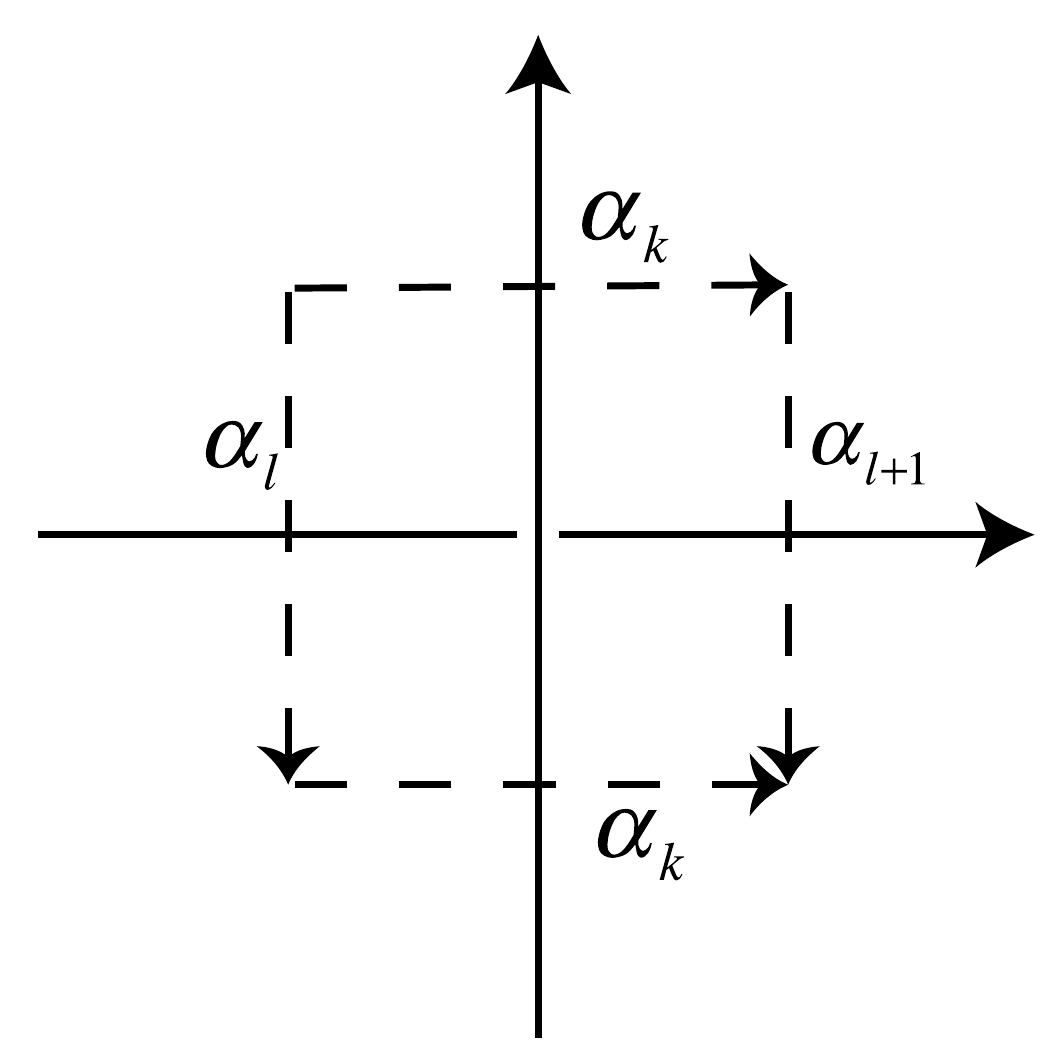} } 
  \caption{Relations at crossings}\label{pic02}
\end{figure}

Let $\mathcal{P}$ be the set of parabolic elements in ${\rm PSL}(2,\mathbb{C})$.
For a boundary-parabolic representation $\rho:\pi_1(K)\rightarrow{\rm PSL}(2,\mathbb{C})$,
put $a_k=\rho(\alpha_k)\in\mathcal{P}$ and call $a_k$ {\it the arc-color of $\alpha_k$} (induced by $\rho$.)
Note that, due to the Wirtinger presentation, the arc-coloring determines the representation $\rho$ uniquely (up to conjugate.) Therefore, from now on, we express the representation $\rho$ by using the arc-coloring of a diagram $D$. 

For $a,b\in\mathcal{P}$, we define the operation $*$ by
\begin{equation}\label{operation}
  a*b=bab^{-1}\in{\rm PSL}(2,\mathbb{C}).
\end{equation}
Then the arc-colors of a crossing satisfy the relation in Figure \ref{pic03}.
Furthermore, the operation {$*b:a\mapsto a*b$} is bijective and satisfies
$$a*a=a\text{ and }(a*b)*c=(a*c)*(b*c),$$
for any $a,b,c\in\mathcal{P}$, which implies $(\mathcal{P},*)$ is a quandle. (See \cite{Kabaya14} or \cite{Cho14a} for details.)
We define the inverse operation $*^{-1}$ by
$$a*^{-1}c=b~\iff~a=b*c.$$

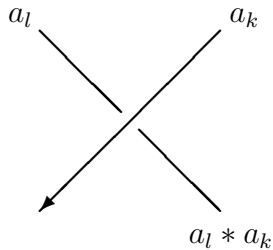
\begin{figure}[h]
\centering  \setlength{\unitlength}{0.6cm}\thicklines
\begin{picture}(6,6)  
    \put(6,5){\vector(-1,-1){4}}
    \put(2,5){\line(1,-1){1.8}}
    \put(4.2,2.8){\line(1,-1){1.8}}
    \put(6.2,5.2){$a_k$}
    \put(1.3,5.2){$a_l$}
    \put(5.3,0.3){$a_l*a_k$}
  \end{picture}
  \caption{Arc-coloring}\label{pic03}
\end{figure}

One trivial, but important fact is that the arc-coloring uniquely changes under the Reidemeister moves.
(This is trivial because arc-coloring is uniquely determined by the representation $\rho$.
Another way to see this fact is to consider the relationship between the Reidemeister moves and the axioms of quandle. See Figure \ref{pic04}.)

\begin{figure}[h]
\centering
\subfigure[$a*a=a$]{
  \includegraphics[scale=1]{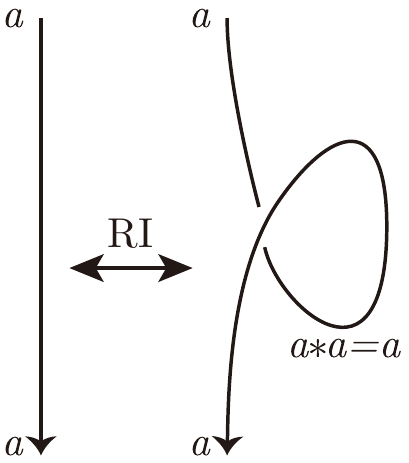}}\hspace{1cm}
  \subfigure[Operation {$*b$} is bijective]{
  \includegraphics[scale=1]{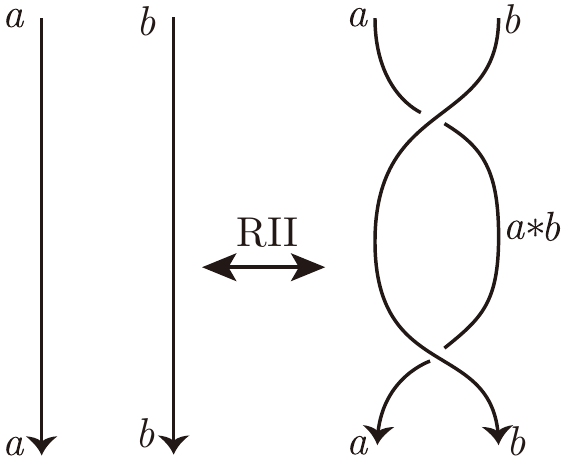} }\\ 
  \subfigure[$(a*b)*c=(a*c)*(b*c)$]{
  \includegraphics[scale=1]{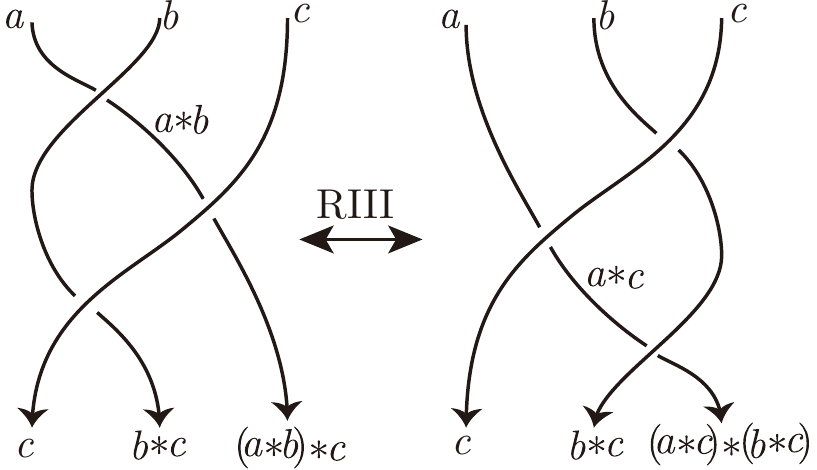} } 
  \caption{Reidemeister moves and the axioms of quandle}\label{pic04}
\end{figure}

\begin{defi} Let $K_1$ and $K_2$ be oriented knots with diagrams $D_1$ and $D_2$, respectively.
For $j=1,2$, let $\rho_j:\pi_1(K_j)\rightarrow{\rm PSL}(2,\mathbb{C})$ be a boundary-parabolic representation. 
For the arc-colorings of $D_1$ and $D_2$ (induced by $\rho_1$ and $\rho_2$, respectively), we make
one arc-color of $D_1$ and another arc-color of $D_2$ coincided by conjugation. 
We denote the coincided arc-color {by} $a\in\mathcal{P}$. 
Then we define the arc-coloring of $D_1\# D_2$ following Figure \ref{pic05}. 
The boundary-parabolic representation induced by this arc-coloring is denoted by 
$$\rho_1\#\rho_2:\pi_1(K_1\# K_2)\rightarrow {\rm PSL}(2,\mathbb{C})$$ and 
is called \textbf{the connected sum of $\rho_1$ and $\rho_2$}.
\end{defi}

\begin{figure}[h]
\centering
\includegraphics[scale=1]{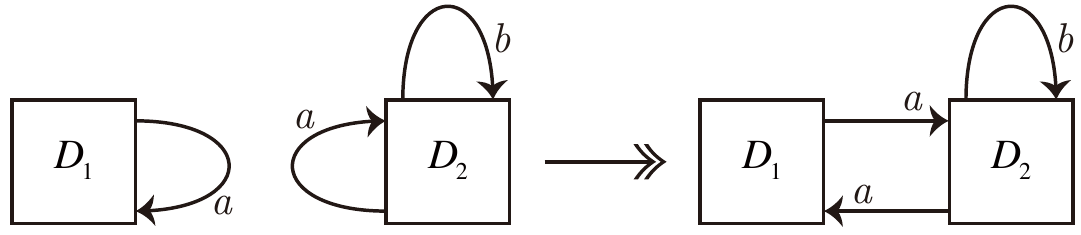}
\caption{Arc-coloring of $D_1\#D_2$}\label{pic05}
\end{figure}

\begin{thm} The connected sum $\rho_1\#\rho_2$ is well-defined up to conjugate.
\end{thm}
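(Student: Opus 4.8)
The plan is to check first that the arc-coloring of $D_1\#D_2$ in Figure \ref{pic05} genuinely induces a homomorphism on $\pi_1(K_1\#K_2)$, and then that its conjugacy class is insensitive to every choice made in the construction: the representatives of $\rho_1,\rho_2$, the diagrams $D_1,D_2$, the pair of arcs that are spliced, and the conjugation used to make the two distinguished arc-colors coincide. Any two nontrivial elements of $\mathcal{P}$ are conjugate in $\mathrm{PSL}(2,\mathbb{C})$, so after conjugating $\rho_2$ the distinguished arc-color of $D_2$ can always be brought to the distinguished arc-color $a$ of $D_1$; thus the common color $a$ exists. Since the splicing of Figure \ref{pic05} introduces no new crossing, the only Wirtinger relations of $D_1\#D_2$ are those of $D_1$ and those of $D_2$ (cf.\ (\ref{Wpre}) and Figure \ref{pic02}), together with the identification of the two distinguished arcs into a single arc colored $a$. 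The sub-colorings of $D_1$ and $D_2$ are unchanged, so all old relations still hold, and the identification is consistent precisely because both distinguished colors equal $a$. Hence the coloring is admissible and defines $\rho_1\#\rho_2$; topologically this is the amalgamated product $\pi_1(K_1)*_{\langle m\rangle}\pi_1(K_2)$ over the shared meridian $m$, and $\rho_1\#\rho_2$ is the representation glued from $\rho_1$ and $\rho_2$ along $\rho_1(m)=\rho_2(m)=a$.

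Next I would dispose of the easy ambiguities. Invariance under the choice of $D_1,D_2$ is immediate: an arc-coloring transforms uniquely under each Reidemeister move and always induces the same representation, so replacing $D_i$ by another diagram of $K_i$ (with the moves localized away from the splicing region) leaves the conjugacy class of $\rho_1\#\rho_2$ unchanged. For the choice of which arc to splice, I would use that all meridians of $K_i$ are conjugate in $\pi_1(K_i)$, so any two candidate arc-colors are conjugate parabolics; sliding the connection point to another arc is realized by an isotopy of $D_1\#D_2$, and the resulting coloring differs from the original only by the matching data treated in the last step.

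The crux, and the step I expect to be the genuine obstacle, is independence of the conjugation used to match the two colors. After a global conjugation I may assume both matchings produce the same common color $a$; the remaining freedom is then to conjugate the $D_1$-side coloring by an element $c$ of the centralizer $C(a)$ of $a$ in $\mathrm{PSL}(2,\mathbb{C})$ while leaving the $D_2$-side fixed, where $C(a)$ is the abelian one-parameter group of parabolics fixing the fixed point of $a$ (together with the identity). I must show that conjugating only the $K_1$-factor by such a $c$ yields a \emph{globally} conjugate representation. Here the amalgamated-product picture makes the danger explicit: a global conjugator $h$ realizing this would have to satisfy $h\in Z(\mathrm{im}\,\rho_2)$ on the $K_2$-side and $hc^{-1}\in Z(\mathrm{im}\,\rho_1)$ on the $K_1$-side. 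When $\rho_1$ and $\rho_2$ are irreducible these centralizers are trivial in $\mathrm{PSL}(2,\mathbb{C})$, forcing $h=1$ and $c=1$, so the naive matching argument cannot close the gap.

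In other words, the $C(a)\cong\mathbb{C}$ of admissible $c$ is exactly a \emph{bending} parameter along the splicing annulus, and bending generically changes the conjugacy class; crucially it cannot be removed by any normalization intrinsic to $a$ alone, since $C(a)$ commutes with $a$ by definition. To salvage well-definedness one would therefore have to rigidify the matching using more of the peripheral structure than the single meridian color $a$ (for instance a second peripheral curve that breaks the $C(a)$-symmetry), or else restrict to a class of representations in which this centralizer ambiguity is absent. Isolating and repairing precisely this point is where the argument is delicate, and I would concentrate the remaining effort there.
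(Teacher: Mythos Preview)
Your instinct is exactly right, and in fact sharper than the paper's own argument. The paper's proof of this theorem is incorrect, and the author later added an errata (Appendix~\ref{errata}) acknowledging that the \emph{statement itself is false}. The flawed step in the paper is the sentence ``For another arc-color $b\in\mathcal{P}$ of $D_2$, there exists unique $c\in\mathcal{P}$ such that $b*c=a$''; bijectivity of $*c$ does not give existence or uniqueness of such a $c$, and the subsequent picture-chase collapses. The errata exhibits the precise counterexample you anticipated: in the $3_1\#4_1$ example of Section~\ref{sec5}, one can conjugate only the $4_1$-side coloring by the centralizer element $*a_3$ (a parabolic fixing the same point as $a_3$), obtaining a second representation of $\pi_1(3_1\#4_1)$ that is \emph{not} conjugate to the first.

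What you called the ``bending parameter along the splicing annulus'' is exactly the phenomenon at play: the freedom $c\in C(a)\cong\mathbb{C}$ cannot be absorbed by a global conjugation when both factors are irreducible, for the reason you gave (the required $h$ would have to lie in two trivial centralizers simultaneously and yet differ from $1$). So you should not try to ``repair'' this point; there is nothing to repair. The honest conclusion is the one the errata reaches: $\rho_1\#\rho_2$ is not a well-defined single conjugacy class but rather a \emph{family} of representations parametrized by this centralizer, and the correct replacement for Theorem~2.2 is the weaker decomposition statement (the new Proposition~2.3 in the appendix). Your analysis already contains all the ingredients of that diagnosis.
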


\begin{proof} 
At first, note that the well-definedness of $K_1\#K_2$ (up to isotopy) is already proved in standard textbooks.

Let $a\in\mathcal{P}$ be the coincided arc-color in the definition.
For another arc-color $b\in\mathcal{P}$ of $D_2$, there exists unique $c\in\mathcal{P}$ such that $b*c=a$.
We will show the arc-coloring of the right-hand side of Figure \ref{pic06} is conjugate with that of Figure \ref{pic05}.
(In Figure \ref{pic06}, $D_2*c$ means the arc-coloring of $D_2$ obtained by acting $*c$ to all arc-colors.)

\begin{figure}[h]
\centering
\includegraphics[scale=1]{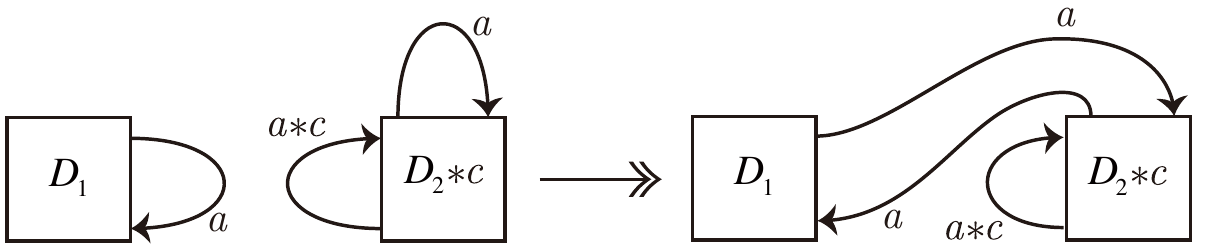}
\caption{Arc-coloring of $D_1\#D_2$ obtained by connecting different arcs}\label{pic06}
\end{figure}

To show the coincidence, we need the observation on the changes of arc-colors in Figure \ref{pic07}.
{ The observation shows that the arc-colors outside $D$ or $D*x$ does not change by moving $D$ or $D*x$ across the crossing. 
Also note that the arc-colors of the two open arcs of $D$ or $D*x$ are always the same.}

\begin{figure}[h]
\centering
\subfigure[Moving under the crossing]{
  \includegraphics[scale=1]{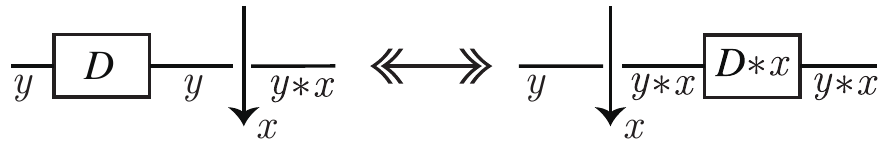}}
\subfigure[Moving over the crossing]{
  \includegraphics[scale=1]{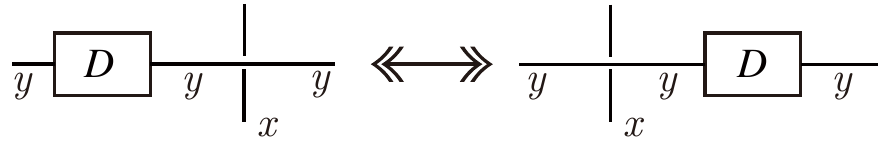} }
  \caption{Changes of arc-colors}\label{pic07}
\end{figure}

{Moving the diagram $D_1$ of the right-hand side of Figure \ref{pic06} (or the left-hand side of Figure \ref{pic08}) inside $D_2*c$,
we obtain the middle picture of Figure \ref{pic08}. (The changed arc-color of $D_1$ is determined by the arc-color $a*c$ of the two arcs.)
By acting $*^{-1}c$ to all arc-colors, we obtain the right-hand side of Figure \ref{pic05}, and the coincidence of the arc-colors is proved.}

\begin{figure}[h]
\centering
\includegraphics[scale=1]{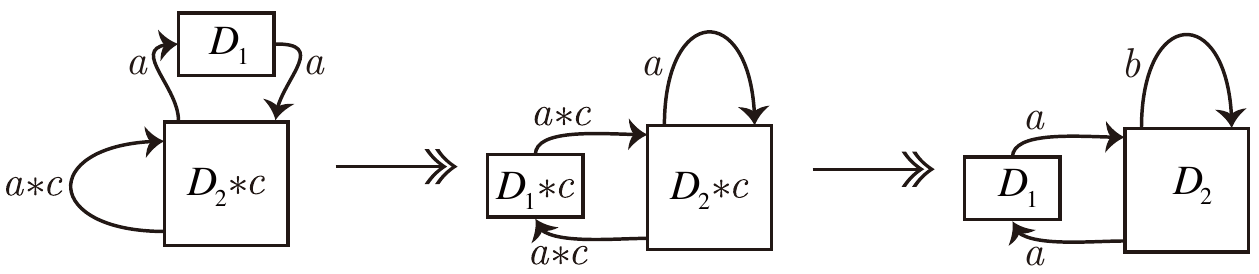}
  \caption{Coincidence of the arc-coloring}\label{pic08}
\end{figure}

On the other hand, the arc-colorings changed by applying Reidemeister moves to the diagrams $D_1$ and $D_2$
are uniquely determined. (See Figure \ref{pic04}.) Therefore, changing diagrams does not have any impact on
the definition of $\rho_1\#\rho_2$.

\end{proof}

\begin{pro}\label{pro1} For a boundary-parabolic representation $\rho:\pi_1(K_1\#K_2)\rightarrow{\rm PSL}(2,\mathbb{C})$,
there exist unique $\rho_1:\pi_1(K_1)\rightarrow{\rm PSL}(2,\mathbb{C})$ and $\rho_2:\pi_1(K_2)\rightarrow{\rm PSL}(2,\mathbb{C})$
satisfying $\rho=\rho_1\#\rho_2$ up to conjugate. (If $K_1=K_2$, then the decomposition is not unique but
$\rho_1\#\rho_2=\rho_2\#\rho_1=\rho$ up to conjugate.)
\end{pro}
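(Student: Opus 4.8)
The plan is to combine the diagrammatic description of representations by arc-colorings from Section~\ref{sec2} with the classical uniqueness of the connected-sum decomposition of knots. Throughout I think of $\rho$ as an arc-coloring of a connected-sum diagram $D_1\# D_2$. The decomposing sphere of $K_1\# K_2$ meets the knot in two points, so in the diagram it appears as a ``neck'' crossed by exactly two arcs; by the observation on the changes of arc-colors used in the proof of the preceding well-definedness theorem (Figure~\ref{pic05} and the accompanying discussion), these two arcs carry the same arc-color, which I call $a\in\mathcal{P}$. Since the neck contains no crossings, every crossing of $D_1\# D_2$ lies either on the $D_1$-side or on the $D_2$-side of the sphere, so the Wirtinger relations split accordingly.

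For existence I would restrict the given arc-coloring to each side. Reconnecting the two neck-arcs, both colored $a$, closes up the $D_1$-side into a genuine diagram of $K_1$, and the inherited colors satisfy the quandle relation at every crossing because these crossings are copied unchanged from $D_1\# D_2$; this produces a representation $\rho_1:\pi_1(K_1)\to{\rm PSL}(2,\mathbb{C})$, and symmetrically $\rho_2$. Each is boundary-parabolic, since every arc-color is parabolic by hypothesis on $\rho$ and every meridian of $K_j$ is a meridian of $K_1\# K_2$. Comparing with Figure~\ref{pic05}, gluing $\rho_1$ and $\rho_2$ back along the common color $a$ returns exactly the coloring $\rho$, so $\rho=\rho_1\#\rho_2$. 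In group-theoretic terms this is the decomposition $\pi_1(K_1\# K_2)=\pi_1(K_1)*_{\langle\mu\rangle}\pi_1(K_2)$ amalgamated along the meridian $\mu$, with $\rho_j$ the restriction of $\rho$ to the $j$-th vertex group.

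For uniqueness the key input is that the decomposing sphere of $K_1\# K_2$ is unique up to isotopy, and up to interchanging the two sides when $K_1=K_2$; this is the classical uniqueness of prime decomposition already invoked for the well-definedness of $K_1\# K_2$. Consequently the partition of the arcs of any diagram into a $K_1$-part and a $K_2$-part, and hence the two vertex subgroups $\pi_1(K_1),\pi_1(K_2)\subset\pi_1(K_1\# K_2)$, are canonical up to conjugacy and, if $K_1=K_2$, permutation. Since an arc-coloring determines its representation uniquely up to conjugacy, any factorization $\rho=\rho_1'\#\rho_2'$ forces $\rho_j'$ to be conjugate to the restriction $\rho_j$ built above, which gives uniqueness; when $K_1=K_2$ the two sides may be exchanged, and $\rho_1\#\rho_2=\rho_2\#\rho_1$ up to conjugacy follows from the symmetry of the definition.

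The step I expect to be the main obstacle is uniqueness, and within it the bookkeeping of the conjugation freedom. The connected-sum construction aligns one color of $D_2$ with a chosen color $a$ by a conjugation, so a fixed $\rho$ admits many literal arc-colorings differing by simultaneous conjugation, and I must check that all of them restrict to the same pair $(\rho_1,\rho_2)$ in the quotient by conjugacy. This reduces to the immediate fact that conjugating the whole coloring of $D_1\# D_2$ conjugates each restricted coloring by the same element; the role of the topological uniqueness of the decomposing sphere is precisely to guarantee that no ambiguity survives beyond this conjugation and the permutation in the $K_1=K_2$ case.
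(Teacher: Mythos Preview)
Your existence argument is essentially the paper's: choose a connected-sum diagram, observe that the two arcs crossing the neck carry the same color because the corresponding meridians are homotopic, and restrict to each side to get $\rho_1$ and $\rho_2$.

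For uniqueness, however, you take a different route. The paper does not invoke the topological uniqueness of the decomposing sphere at all; it simply fixes one diagram $D_1\# D_2$ and argues that if $\rho_1'\#\rho_2'=\rho=\rho_1\#\rho_2$ up to conjugacy, then the well-definedness theorem (Theorem~2.2) lets one realize $\rho_1'\#\rho_2'$ as an arc-coloring on that same diagram, which must then be conjugate to the one coming from $\rho$, so the restrictions to $D_1$ and $D_2$ are conjugate as well. Your argument instead pushes the work onto the classical fact that the decomposing sphere is unique up to isotopy, which makes the vertex subgroups $\pi_1(K_j)\subset\pi_1(K_1\#K_2)$ canonical up to conjugacy, and then reads off uniqueness of the restrictions. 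The paper's approach is shorter and stays entirely inside the diagrammatic framework, but it leans on Theorem~2.2; your approach is heavier topologically but makes the dependence on the underlying geometry explicit and does not rely on the well-definedness theorem.
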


\begin{proof}
Choose a diagram $D_1\#D_2$ of $K_1\#K_2$ as in Figure \ref{pic09}(a). Then
the arc-colors $a, b\in\mathcal{P}$ should satisfy $a=b$ because the corresponding meridian loops
are homotopic. Hence we can define $\rho_1$ and $\rho_2$ using the arc-colorings in Figure \ref{pic09}(b).

\begin{figure}[h]
\centering
\subfigure[]{
  \includegraphics[scale=1]{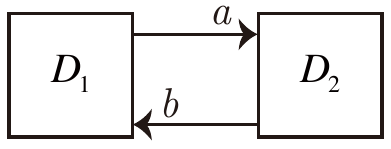}}\hspace{1.5cm}
\subfigure[]{
  \includegraphics[scale=1]{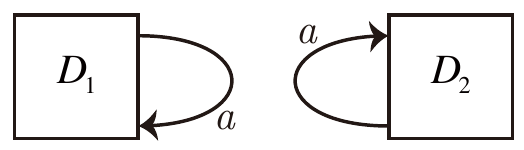} }
  \caption{Arc-colors of $D_1$ and $D_2$ induced by the arc-color of $D_1\#D_2$}\label{pic09}
\end{figure}

To show the uniqueness, assume $\rho_1'\#\rho_2'=\rho=\rho_1\#\rho_2$ up to conjugate.
Then $\rho_1'\#\rho_2'$ also induces an arc-coloring of $D_1\#D_2$, which should be conjugate with
the arc-coloring induced by $\rho$. Therefore, $\rho_1'=\rho_1$ and $\rho_2'=\rho_2$ up to conjugate.

\end{proof}

The general case of $K=K_1\#\ldots\#K_g$ in Section \ref{sec1} can be proved by Proposition \ref{pro1} and the induction on $g$.

Remark that all discussions in this section can be easily generalized to any representation $\rho_j:\pi_1(K_j)\rightarrow {\rm GL}(k,\mathbb{C})$. One obstruction is that, for $\rho_1$ and $\rho_2$,
$\rho_1\#\rho_2$ is defined only when $\rho_1(\alpha)$ is conjugate with $\rho_2(\beta)$ for some meridian loops $\alpha\in\pi_1(K_1)$ and $\beta\in\pi_1(K_2)$.
Also, generalization to links is possible if we specify which components are connected by the connected sum.

\section{Complex volume of $\rho$}\label{sec3}

To calculate the complex volume of $\rho_1\#\rho_2$ explicitly, we briefly review the shadow-coloring of \cite{Cho14a}
and the main result of \cite{Cho14c}.

We identify $\mathbb{C}^2\backslash\{0\}/\pm$ with $\mathcal{P}$ by
\begin{equation}\label{lift}
  \left(\begin{array}{c}\alpha \\\beta\end{array}\right)  
  \longleftrightarrow\left(\begin{array}{cc}1+\alpha\beta & -\alpha^2 \\ \beta^2& 1-\alpha\beta\end{array}\right).
\end{equation}
Then the operation $*$ defined in (\ref{operation}) is given by
\begin{eqnarray*}
  \left(\begin{array}{c}\alpha \\\beta\end{array}\right)*\left(\begin{array}{c}\gamma \\ \delta\end{array}\right)
  =\left(\begin{array}{cc}1+\gamma\delta & -\gamma^2 \\ \delta^2& 1-\gamma\delta\end{array}\right)
  \left(\begin{array}{c}\alpha \\\beta\end{array}\right)\in\mathcal{P},
\end{eqnarray*}
where the operation on the right-hand side is the usual matrix multiplication.
The inverse operation $*^{-1}$ is given by
$$\left(\begin{array}{c}\alpha \\\beta\end{array}\right)*^{-1}\left(\begin{array}{c}\gamma \\ \delta\end{array}\right)
  =\left(\begin{array}{cc}1-\gamma\delta & \gamma^2 \\ -\delta^2& 1+\gamma\delta\end{array}\right)
  \left(\begin{array}{c}\alpha \\\beta\end{array}\right)\in\mathcal{P}.$$

{\it The Hopf map} $h:\mathcal{P}\rightarrow\mathbb{CP}^1=\mathbb{C}\cup\{\infty\}$ is defined by
$$\left(\begin{array}{c}\alpha \\\beta\end{array}\right)\mapsto \frac{\alpha}{\beta}.$$

For the given arc-coloring of the diagram $D$ with arc-colors $a_1,\ldots,a_n$, we assign {\it region-colors} $s_1,\ldots,s_m\in\mathcal{P}$ to regions of $D$ satisfying the rule in Figure \ref{pic10}.
Note that, if an arc-coloring is fixed, then a choice of one region-color determines all the other region-colors.

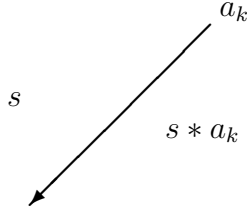
\begin{figure}[h]
\centering  \setlength{\unitlength}{0.6cm}\thicklines
\begin{picture}(6,5)  
    \put(6,4){\vector(-1,-1){4}}
    \put(1.5,2.2){$s$}
    \put(5,1.5){$s*a_k$}
    \put(6.2,4.2){$a_k$}
  \end{picture}
  \caption{Region-coloring}\label{pic10}
\end{figure}

\begin{lem}\label{lem} 
Consider the arc-coloring induced by the boundary-parabolic representation $\rho:\pi_1(K)\rightarrow {\rm PSL}(2,\mathbb{C})$.
Then, for any triple $(a_k,s,s*a_k)$ of an arc-color $a_k$ and its surrounding region-colors $s, s*a_k$ as in Figure \ref{pic10},
there exists a region-coloring satisfying
\begin{equation*}
  h(a_k)\neq h(s)\neq h(s*a_k)\neq h(a_k).
\end{equation*}
\end{lem}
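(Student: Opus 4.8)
The plan is to reduce the three simultaneous inequalities to a single genericity condition on the region-colors, and then to show that this condition can be met by choosing the base region-color appropriately.

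First I would record two elementary facts about the Hopf map. Since each arc-color $a_k$ is parabolic, the matrix in (\ref{lift}) has $h(a_k)=\alpha/\beta$ as its \emph{unique} fixed point on $\mathbb{CP}^1$ under the M\"obius action; indeed the fixed-point equation reduces to $(\beta z-\alpha)^2=0$. Second, because $s*a_k=a_k\,s\,a_k^{-1}$, the fixed point of $s*a_k$ is the image of the fixed point of $s$, so that
\begin{equation*}
  h(s*a_k)=a_k\cdot h(s),
\end{equation*}
where $a_k\cdot$ denotes the M\"obius action of $a_k$. Using these two facts, I would show that all three inequalities in the statement are equivalent to the single condition $h(s)\neq h(a_k)$. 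Indeed, $h(s*a_k)=h(s)$ forces $a_k$ to fix $h(s)$, whence $h(s)=h(a_k)$; and $h(s*a_k)=h(a_k)$ gives $h(s)=a_k^{-1}\cdot h(a_k)=h(a_k)$. Thus it suffices to produce one region-coloring with $h(s_i)\neq h(a_j)$ for every adjacent arc-region incidence.

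Next I would use the fact (noted after Figure \ref{pic10}) that fixing one region-color $s_0$ determines all the others. Crossing an arc replaces the current region-color $s$ by $s*a_k=a_k\,s\,a_k^{-1}$ or by $s*^{-1}a_k=a_k^{-1}\,s\,a_k$, so each region-color has the form $s_i=g_i\,s_0\,g_i^{-1}$ for a fixed word $g_i$ in the arc-colors along a path from the base region. By the equivariance above, $h(s_i)=g_i\cdot h(s_0)$, and hence for each bad incidence the forbidden equality $h(s_i)=h(a_j)$ is equivalent to
\begin{equation*}
  h(s_0)=g_i^{-1}\cdot h(a_j),
\end{equation*}
a single point of $\mathbb{CP}^1$.

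Finally, since the diagram has finitely many crossings there are only finitely many such forbidden points. As the Hopf map $h:\mathcal{P}\rightarrow\mathbb{CP}^1$ is surjective and $\mathbb{CP}^1$ is infinite, I can choose $s_0\in\mathcal{P}$ with $h(s_0)$ avoiding all of them; the induced region-coloring then satisfies $h(s)\neq h(a_k)$ at every triple, which by the first step yields the desired inequalities. The step I expect to require the most care is the reduction of all three inequalities to the single condition $h(s)\neq h(a_k)$, via the equivariance $h(s*a_k)=a_k\cdot h(s)$ and the uniqueness of the parabolic fixed point; once that is established, the rest is a routine argument of avoiding finitely many points. (Implicit throughout is that the region-coloring is globally consistent for every choice of $s_0$, which follows from the quandle axioms underlying the shadow-coloring, as in \cite{Cho14a}.)
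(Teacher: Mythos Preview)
Your proof is correct. The paper itself does not give a proof but only cites Lemma 2.4 of \cite{Cho14a}; your argument is almost certainly the intended one, and in any case it is complete. The reduction of the three inequalities to the single condition $h(s)\neq h(a_k)$ is exactly right: under the vector identification the operation $*a_k$ is left multiplication by the matrix of $a_k$, so $h(s*a_k)=a_k\cdot h(s)$ as a M\"obius transformation, and parabolicity gives a unique fixed point $h(a_k)$, forcing the equivalences you state. The remaining genericity argument---each region-color is $g_i\cdot s_0$ for a fixed word $g_i$, so each bad incidence pins $h(s_0)$ to a single point of $\mathbb{CP}^1$, and finitely many such points can be avoided---is routine and correct. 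Your parenthetical about global consistency of the region-coloring is the only nonobvious ingredient not fully spelled out, but it is the standard shadow-coloring well-definedness (the quandle relation at each crossing makes the monodromy around a crossing trivial), which is indeed what \cite{Cho14a} establishes.
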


\begin{proof}
See Proof of Lemma 2.4 in \cite{Cho14a}.

\end{proof}

The arc-coloring induced by $\rho$ together with the region-coloring satisfying Lemma \ref{lem}
is called {\it the shadow-coloring induced by} $\rho$. 
We choose $p\in\mathcal{P}$ so that 
\begin{equation}\label{p}
h(p)\notin\{h(a_1),\ldots,h(a_n), h(s_1),\ldots,h(s_m)\}.
\end{equation}

From now on, we fix the representatives of shadow-colors in $\mathbb{C}^2\backslash\{0\}$, not in $\mathcal{P}$. Note that this may cause inconsistency of some signs of arc-colors under the operation $*$.
(In other words, for arc-colors $a_j,a_k,a_l\in\mathcal{P}$ with $a_j=a_k*a_l$, we allow $a_j=\pm a_k*a_l\in \mathbb{C}^2\backslash\{0\}$. As discussed in \cite{Cho14a}, this inconsistency does not make any problem.)
For $a=\left(\begin{array}{c}\alpha_1 \\\alpha_2\end{array}\right)$ and $
b=\left(\begin{array}{c}\beta_1 \\\beta_2\end{array}\right)$ in $\mathbb{C}^2\backslash\{0\}$,
we define {\it the determinant} $\det(a,b)$ by
\begin{equation*}
  \det(a,b):=\det\left(\begin{array}{cc}\alpha_1 & \beta_1 \\\alpha_2 & \beta_2\end{array}\right)=\alpha_1 \beta_2-\beta_1 \alpha_2.
\end{equation*}

For the knot diagram $D$,
we assign variables $w_1,\ldots,w_m$ to the regions with region-colors $s_1,\ldots,s_m$, respectively, 
and define a potential function of a crossing $j$ as in Figure \ref{fig01},
{where $\li(z)=-\int_0^z \frac{\log(1-t)}{t}dt$ is the dilogarithm function.}

\begin{figure}[h]
\setlength{\unitlength}{0.4cm}
\subfigure[Positive crossing]{
  \begin{picture}(35,6)\thicklines
    \put(6,5){\vector(-1,-1){4}}
    \put(2,5){\line(1,-1){1.8}}
    \put(4.2,2.8){\vector(1,-1){1.8}}
    \put(3.5,1){$w_a$}
    \put(5.5,3){$w_b$}
    \put(3.5,4.5){$w_c$}
    \put(1.5,3){$w_d$}
    \put(8,3){$\longrightarrow$}
    \put(11,4){$W_j:=-\li(\frac{w_c}{w_b})-\li(\frac{w_c}{w_d})+\li(\frac{w_a w_c}{w_b w_d})+\li(\frac{w_b}{w_a})+\li(\frac{w_d}{w_a})$}
    \put(15,2){$-\frac{\pi^2}{6}+\log\frac{w_b}{w_a}\log\frac{w_d}{w_a}$}
    \put(3.6,2){$j$}
  \end{picture}}\\
\subfigure[Negative crossing]{
  \begin{picture}(35,6)\thicklines
    \put(2,5){\vector(1,-1){4}}
    \put(6,5){\line(-1,-1){1.8}}
    \put(3.8,2.8){\vector(-1,-1){1.8}}
    \put(3.5,1){$w_a$}
    \put(5.5,3){$w_b$}
    \put(3.5,4.5){$w_c$}
    \put(1.5,3){$w_d$}
    \put(8,3){$\longrightarrow$}
    \put(11,4){$W_j:=\li(\frac{w_c}{w_b})+\li(\frac{w_c}{w_d})-\li(\frac{w_a w_c}{w_b w_d})-\li(\frac{w_b}{w_a})-\li(\frac{w_d}{w_a})$}
    \put(15,2){$+\frac{\pi^2}{6}-\log\frac{w_b}{w_a}\log\frac{w_d}{w_a}$}
        \put(3.6,2){$j$}
  \end{picture}}
  \caption{Potential function of the crossing $j$}\label{fig01}
\end{figure}

Then the potential function of $D$ is defined by
$$W(w_1,\ldots,w_m):=\sum_{j\text{ : crossings}}W_j,$$
and we modify it to
\begin{equation*}
W_0(w_1,\ldots,w_m):=W(w_1,\ldots,w_m)-\sum_{k=1}^m \left(w_k\frac{\partial W}{\partial w_k}\right)\log w_k.
\end{equation*}

Also, from the potential function $W(w_1,\ldots,w_m)$, we define a set of equations
\begin{equation*}
\mathcal{I}:=\left\{\left.\exp\left(w_k\frac{\partial W}{\partial w_k}\right)=1\right|k=1,\ldots,m\right\}.
\end{equation*}
Then, from Proposition 1.1 of \cite{Cho13c}, $\mathcal{I}$ becomes the set of hyperbolicity equations of
the five-term triangulation of $\mathbb{S}^3\backslash (K\cup\{\text{two points}\})$. Here, hyperbolicity equations
are the equations that determine the complete hyperbolic structure of the triangulation, which consist of
gluing equations of edges and completeness condition. 
According to Yoshida's construction in Section 4.5 of \cite{Tillmann13},
a solution $\bold w=(w_1,\ldots,w_m)$ of $\mathcal{I}$ determines the boundary-parabolic representation
\begin{equation*}
\rho_{\bold{w}}:\pi_1(\mathbb{S}^3\backslash (K\cup\{\text{two points}\}))=\pi_1(\mathbb{S}^3\backslash K)\longrightarrow{\rm PSL}(2,\mathbb{C}),
\end{equation*} 
up to conjugate.

\begin{thm}[\cite{Cho14c}]\label{thm2}
For any boundary-parabolic representation $\rho:\pi_1(K)\rightarrow{\rm PSL}(2,\mathbb{C})$ and any knot diagram $D$ of $K$,
there exists the solution $\bold w^{(0)}$ of $\mathcal{I}$ satisfying
$\rho_{\bold w^{(0)}}=\rho,$
up to conjugate. Furthermore, 
\begin{equation}\label{W1}
W_0(\bold{w}^{(0)})\equiv i(\vol(\rho)+i\,\cs(\rho))\modulo.
\end{equation}
The value $\vol(\rho)+i\,\cs(\rho)$ is called \textbf{the complex volume of $\rho$.}
\end{thm}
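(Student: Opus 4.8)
The plan is to reduce the statement to already-established results, treating Theorem \ref{thm2} as a citation from \cite{Cho14c} whose proof I will reconstruct in outline rather than from scratch. The claim has two halves: first the existence of a solution $\bold w^{(0)}$ of $\mathcal{I}$ whose associated representation $\rho_{\bold w^{(0)}}$ equals the given $\rho$ up to conjugate, and second the evaluation of $W_0$ at that solution as the complex volume mod $\pi^2$. I would attack these in that order, since the second half only makes sense once a distinguished solution is in hand.

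For the existence half, I would start from the shadow-coloring induced by $\rho$, which exists by Lemma \ref{lem} (the non-degeneracy condition $h(a_k)\neq h(s)\neq h(s*a_k)\neq h(a_k)$ at every crossing is exactly what guarantees the five tetrahedra at each crossing are non-degenerate). First I would fix representatives of all region-colors $s_1,\dots,s_m$ in $\mathbb{C}^2\backslash\{0\}$ and of the auxiliary point $p$ satisfying \eqref{p}. Then I would define the candidate solution by setting each $w_k$ to be a cross-ratio-type expression built from the determinants $\det(s_k,p)$, $\det(a_j,p)$ of the shadow-colors surrounding region $k$; the Hopf map and the determinant pairing are precisely the tools \cite{Cho14a} uses to turn a shadow-coloring into shape parameters. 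The verification that this $\bold w^{(0)}$ satisfies every equation $\exp(w_k\,\partial W/\partial w_k)=1$ in $\mathcal{I}$ is a crossing-by-crossing computation: each partial derivative collects the dilogarithm contributions of the crossings adjacent to region $k$, and the quandle relations $a*b=bab^{-1}$ together with the $\det$ identities force the logarithmic derivatives to land in $2\pi i\mathbb{Z}$. Finally, Yoshida's construction (Section 4.5 of \cite{Tillmann13}) applied to this solution recovers a representation, and I would check it is conjugate to $\rho$ by comparing holonomies of meridians, using that the arc-colors $a_k=\rho(\alpha_k)$ are built into the shadow-coloring.

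For the complex-volume half, the strategy is to match $W_0(\bold w^{(0)})$ against the Neumann--Zickert formula for $\vol(\rho)+i\,\cs(\rho)$ as a sum of Rogers-type dilogarithms over the tetrahedra of the five-term triangulation. Evaluating the potential $W$ at $\bold w^{(0)}$ turns each crossing term $W_j$ into the extended Bloch-group dilogarithm of the five tetrahedra at crossing $j$; the correction term $-\sum_k (w_k\,\partial W/\partial w_k)\log w_k$ in $W_0$ is designed exactly to cancel the log-squared ambiguities and render the whole expression well-defined modulo $\pi^2$. Summing over all crossings reproduces $i$ times the complex volume of the glued manifold $\mathbb{S}^3\backslash(K\cup\{\text{two points}\})$, which equals that of $\mathbb{S}^3\backslash K$ because the two extra puncture points contribute nothing to volume or Chern--Simons.

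I expect the main obstacle to be the bookkeeping of signs and $\pi^2$-ambiguities in the second half rather than any conceptual difficulty. The inconsistency of signs flagged in the excerpt—that fixing representatives in $\mathbb{C}^2\backslash\{0\}$ allows $a_j=\pm a_k*a_l$—means each dilogarithm argument is well-defined only up to the flattening choices of the extended Bloch group, so the equality \eqref{W1} can only hold modulo $\pi^2$, and pinning down that this is the correct modulus (as opposed to $\pi^2/6$ or $4\pi^2$) requires carefully tracking how the $-\pi^2/6$ constants and the $\log\frac{w_b}{w_a}\log\frac{w_d}{w_a}$ terms in Figure \ref{fig01} combine across adjacent crossings. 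Since the detailed verification is carried out in \cite{Cho14c}, I would cite it for the sign analysis and present here only the structural correspondence between the potential function and the extended Bloch-group invariant.
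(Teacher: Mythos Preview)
The paper does not prove Theorem \ref{thm2} at all: it is quoted as a result of \cite{Cho14c}, and immediately after the statement the paper simply records the explicit formula $w_k^{(0)}:=\det(p,s_k)$ from that reference without argument. So there is no ``paper's own proof'' to compare against; your outline is a reasonable sketch of how the cited result is established, and your instinct to treat it as a black box and cite \cite{Cho14c} is exactly what the paper does.

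One correction to your sketch: you propose defining $w_k^{(0)}$ as ``a cross-ratio-type expression built from the determinants $\det(s_k,p)$, $\det(a_j,p)$,'' but in fact the formula is much simpler---just the single determinant $w_k^{(0)}=\det(p,s_k)$, with no arc-colors and no ratio involved. The cross-ratios you have in mind are the \emph{shape parameters} of the five tetrahedra at each crossing, and those are indeed ratios of the $w_k^{(0)}$'s (hence ratios of determinants), but the region variables themselves are bare determinants. This matters for the verification that $\bold w^{(0)}$ satisfies $\mathcal I$: the equations $\exp(w_k\,\partial W/\partial w_k)=1$ are written directly in the $w_k$, and the check goes through because the dilogarithm arguments in Figure \ref{fig01} are already ratios like $w_c/w_b$, so substituting $w_k=\det(p,s_k)$ produces the expected cross-ratios automatically.
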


The explicit formula of $\bold w^{(0)}=(w_1^{(0)},\ldots,w_m^{(0)})$ is very simple. For a region of $D$ with region-color $s_k$ {satisfying} Lemma \ref{lem} and region-variable $w_k$, 
the value $w_k^{(0)}$ of the region-variable is defined by
\begin{equation}\label{main}
w_k^{(0)}:=\det(p,s_k).
\end{equation}

\begin{cor}\label{cor1} For a boundary-parabolic representation $\rho_1\#\rho_2\,:\,\pi_1(K_1\#K_2)\rightarrow {\rm PSL}(2,\mathbb{C})$,
we have
\begin{equation}\label{addvol}
  \vol(\rho_1\#\rho_2)+i\,\cs(\rho_1\#\rho_2)\equiv (\vol(\rho_1)+i\,\cs(\rho_1))+(\vol(\rho_2)+i\,\cs(\rho_2))~~({\rm mod}~i\,\pi^2 ).
\end{equation}
\end{cor}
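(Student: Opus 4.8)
The plan is to reduce everything to the potential-function computation of Theorem \ref{thm2}, exploiting the fact that forming the connected sum of diagrams neither creates nor destroys crossings. First I would fix a shadow-coloring of $D_1\#D_2$ induced by $\rho_1\#\rho_2$ and choose a single $p\in\mathcal{P}$ satisfying (\ref{p}) simultaneously for all arc- and region-colors appearing in $D_1\#D_2$. Restricting this shadow-coloring to the two halves recovers shadow-colorings of $D_1$ and $D_2$ induced by $\rho_1$ and $\rho_2$ (by Proposition \ref{pro1} these are exactly the factors), and the very same $p$ then serves all three diagrams in formula (\ref{main}).

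Since the crossing set of $D_1\#D_2$ is the disjoint union of the crossing sets of $D_1$ and $D_2$, the potential function splits. Writing the region-variables of $D_1\#D_2$ so that the two pairs of regions merged by the connect-sum operation carry a common variable (an Euler-characteristic count shows exactly two such identifications occur), one has $W^{(1\#2)}=W^{(1)}+W^{(2)}$, each summand depending only on the variables of its half; note the per-crossing constants $\pm\frac{\pi^2}{6}$ of Figure \ref{fig01} are simply inherited. Using $w_k^{(0)}=\det(p,s_k)$ from (\ref{main}), the distinguished solution $\bold w^{(0)}$ of $\mathcal{I}$ for $D_1\#D_2$ restricts to the distinguished solutions $\bold w_1^{(0)}$ and $\bold w_2^{(0)}$ of $\mathcal{I}_1$ and $\mathcal{I}_2$, because the region-colors of each half are literally the region-colors of $D_1\#D_2$. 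This restriction of solutions is the combinatorial heart of the argument and is precisely what lets the glued triangulation detect the two original ones.

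Next I would show that the modified potentials add as well: $W_0^{(1\#2)}(\bold w^{(0)})\equiv W_0^{(1)}(\bold w_1^{(0)})+W_0^{(2)}(\bold w_2^{(0)})\pmod{\pi^2}$. The genuine potentials already add by the previous step, so the only remaining work is to control the correction term $-\sum_k (w_k\,\partial W/\partial w_k)\log w_k$. At any solution of $\mathcal{I}$ each factor $w_k\,\partial W/\partial w_k$ lies in $2\pi i\,\mathbb{Z}$, so the correction is a $\mathbb{Z}$-combination of terms $2\pi i\log w_k^{(0)}$; I would verify that the contributions of the two merged junction regions either cancel or differ by multiples of $\pi^2$, so that splitting the sum over the two halves is legitimate modulo $\pi^2$. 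Finally, applying (\ref{W1}) of Theorem \ref{thm2} to each of the three diagrams turns this additive relation into $i(\vol(\rho_1\#\rho_2)+i\,\cs(\rho_1\#\rho_2))\equiv i(\vol(\rho_1)+i\,\cs(\rho_1))+i(\vol(\rho_2)+i\,\cs(\rho_2))\pmod{\pi^2}$; dividing through by $i$ converts the modulus $\pi^2$ into $i\pi^2$ and yields (\ref{addvol}).

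The hard part will be the combination of the restriction-of-solutions step with the bookkeeping in the third paragraph: verifying precisely that the merged regions at the junction, together with the branch-dependent $\log$ terms in the $W_0$-correction, contribute only multiples of $\pi^2$, so that no spurious term survives the splitting. Everything else is a clean consequence of the crossing-wise additivity of $W$ and of Theorem \ref{thm2}.
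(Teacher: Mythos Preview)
Your approach is essentially the paper's own: fix a shadow-coloring of $D_1\#D_2$ with a single $p$, observe that the two junction regions are shared so that $W=W_1+W_2$, restrict the solution $\bold w^{(0)}$ of $\mathcal{I}$ to solutions $\bold w_1^{(0)},\bold w_2^{(0)}$ of $\mathcal{I}_1,\mathcal{I}_2$ (this is exactly the content of Lemma~\ref{lem3}), and then apply Theorem~\ref{thm2} three times.

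The one place you overcomplicate matters is the ``hard part'' you flag at the end. There is no bookkeeping to do at the junction: since $W=W_1+W_2$ as functions, one has $\partial W/\partial w_k=\partial W_1/\partial w_k+\partial W_2/\partial w_k$ for every $k$ (with one summand vanishing away from the two shared regions), so the correction term $-\sum_k(w_k\,\partial W/\partial w_k)\log w_k$ splits \emph{exactly} into the corrections for $W_1$ and $W_2$. Hence $W_0(\bold w^{(0)})=(W_1)_0(\bold w_1^{(0)})+(W_2)_0(\bold w_2^{(0)})$ on the nose, with no need to invoke $2\pi i\,\mathbb{Z}$-valuedness or work modulo $\pi^2$; the paper accordingly asserts this identity without comment.
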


\begin{proof}
For the connected sum $K_1\#K_2$,
consider a diagram $D_1\#D_2$ and its shadow-coloring induced by $\rho_1\#\rho_2$. 
(Remark that the shadow-coloring satisfies Lemma \ref{lem}.)
By rearranging the indices, we assume
$\{s_1,\ldots,s_l,s_{l+1}\}$ and $\{s_l,s_{l+1},\ldots,s_m\}$ are the region-colors of
$D_1$ and $D_2$, respectively, and $s_l$ is the region-color assigned to the unbounded region of $D_1\#D_2$.
(See Figure \ref{pic12}(a).)

\begin{figure}[h]
\centering
\subfigure[$D_1\#D_2$]{
  \includegraphics[scale=1]{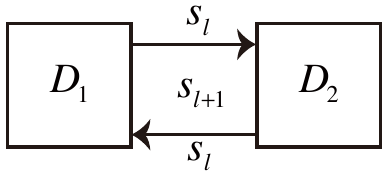}}\hspace{1cm}
  \subfigure[$D_1$]{
  \includegraphics[scale=1]{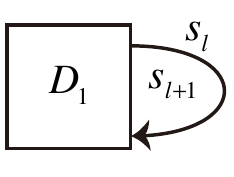} }\hspace{1cm}
  \subfigure[$D_2$]{
  \includegraphics[scale=1]{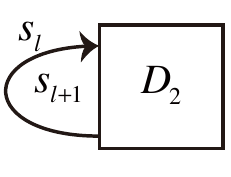} } 
  \caption{Region-colorings of diagrams}\label{pic12}
\end{figure}

Let $W_1(w_1,\ldots,w_l,w_{l+1})$ and $W_2(w_l,w_{l+1},\ldots,w_m)$ be the potential functions
of the diagrams $D_1$ and $D_2$ in Figures \ref{pic12}(b) and (c), respectively.
Then 
$$W(w_1,\ldots,w_m)=W_1(w_1,\ldots,w_l,w_{l+1})+W_2(w_l,w_{l+1},\ldots,w_m)$$ 
holds trivially.

\begin{lem} \label{lem3}
For the solution $\bold w^{(0)}=(w_1^{(0)},\ldots,w_l^{(0)},w_{l+1}^{(0)},\ldots,w_m^{(0)})$ of $\mathcal{I}$ defined by (\ref{main}), let $\bold w_1^{(0)}:=(w_1^{(0)},\ldots,w_l^{(0)},w_{l+1}^{(0)})$ and $\bold w_2^{(0)}:=(w_l^{(0)},w_{l+1}^{(0)},\ldots,w_m^{(0)})$. Then $\bold w_1^{(0)}$ and $\bold w_2^{(0)}$ are solutions of 
$\mathcal{I}_1:=\left\{\left.\exp\left(w_k\frac{\partial W_1}{\partial w_k}\right)=1\right|k=1,\ldots,l,l+1\right\}$
 and $\mathcal{I}_2:=\left\{\left.\exp\left(w_k\frac{\partial W_2}{\partial w_k}\right)=1\right|k=l,l+1,\ldots,m\right\}$, respectively. Furthermore, 
\begin{equation*}
  \rho_{\bold w_j^{(0)}}=\rho_j
\end{equation*}
up to conjugate, and
\begin{equation}\label{volk}
(W_j)_0(\bold w_j^{(0)})\equiv i(\vol(\rho_j)+i\,\cs(\rho_j))\modulo
\end{equation}
for $j=1,2$.\end{lem}

\begin{proof}
Note that the arc-colorings of $D_1$ and $D_2$ induce the representations $\rho_1$ and $\rho_2$, respectively.
Both of the region-colorings $\{s_1,\ldots,s_l,s_{l+1}\}$ and $\{s_l,s_{l+1},\ldots,s_m\}$ of $D_1$ and $D_2$ in
Figures \ref{pic12}(b) and (c), respectively, satisfy Lemma \ref{lem}. 
Therefore, by applying Theorem \ref{thm2} to Figures \ref{pic12}(b) and (c), we obtain the results of this lemma.

\end{proof}

The relation (\ref{addvol}) is directly obtained by (\ref{W1}), (\ref{volk}) and
$$W_0(\bold w^{(0)})=(W_1)_0(\bold w_1^{(0)})+(W_2)_0(\bold w_2^{(0)}),$$
which complete the proof of Corollary \ref{cor1}.

\end{proof}

\section{Twisted Alexander polynomial of $\widetilde{\rho}$}\label{sec4}

To calculate (Wada's) twisted Alexander polynomial, we briefly summarize the calculation method 
in Section 2 of \cite{Morifuji08}.

At first, we lift the boundary-parabolic representation $\rho:\pi_1(K)\rightarrow {\rm PSL}(2,\mathbb{C})$
to $\widetilde{\rho}:\pi_1(K)\rightarrow {\rm SL}(2,\mathbb{C})$
by assuming all arc-colors have trace two. As a matter of fact, this assumption was already reflected
in the right-hand side of (\ref{lift}). Under this lifting, we can trivially obtain
$$\widetilde{\rho_1\#\rho_2}=\widetilde{\rho_1}\#\widetilde{\rho_2}.$$
Therefore, we will use $\widetilde{\rho_1}\#\widetilde{\rho_2}$ instead of $\widetilde{\rho_1\#\rho_2}$
from now on.

Consider the Wirtinger presentation of $\pi_1(K)$ in (\ref{Wpre}). Let 
$$\gamma:\pi_1(K) \rightarrow \mathbb{Z}=<t>$$
be the abelianization homomorphism given by $\gamma(\alpha_1)=\ldots=\gamma(\alpha_n)=t$.
We define the tensor product of $\widetilde{\rho}$ and $\gamma$ by
$$(\widetilde{\rho}\otimes\gamma)(x)=\widetilde{\rho}(x)\gamma(x),$$
for $x\in\pi_1(K)$.

From the maps $\widetilde{\rho}$ and $\gamma$, we obtain natural ring homomorphisms 
$\widetilde{\rho}_*:\mathbb{Z}[\pi_1(K)]\rightarrow M(2,\mathbb{C})$ and
$\gamma_*:\mathbb{Z}[\pi_1(K)]\rightarrow \mathbb{Z}[t,t^{-1}]$,
where $\mathbb{Z}[\pi_1(K)]$ is the group ring of $\pi_1(K)$ and $M(2,\mathbb{C})$ is
the matrix algebra consisting of $2\times 2$ matrices over $\mathbb{C}$.
Combining them, we obtain a ring homomorphism
$$\widetilde{\rho}_*\otimes \gamma_*:\mathbb{Z}[\pi_1(K)]\rightarrow M(2,\mathbb{C}[t,t^{-1}]).$$

Let $F_n=<\alpha_1,\ldots,\alpha_n>$ be the free group and $\psi:\mathbb{Z}[F_n]\rightarrow \mathbb{Z}[\pi_1(K)]$
be the natural surjective homomorphism. Define $\Phi:\mathbb{Z}[F_n]\rightarrow M(2,\mathbb{C}[t,t^{-1}])$ by
$$\Phi=(\widetilde{\rho}_*\otimes\gamma_*)\circ\psi.$$

Consider the $(n-1)\times n$ matrix $M_{\widetilde{\rho}}$ whose $(k,j)$-component is the $2\times 2$ matrix
$$\Phi\left(\frac{\partial r_k}{\partial \alpha_j}\right)\in M(2,\mathbb{C}[t,t^{-1}]),$$
where $\frac{\partial}{\partial \alpha_j}$ denotes the Fox calculus. 
We call $M_{\widetilde{\rho}}$ \textbf{the Alexander matrix associated to $\widetilde\rho$}. We denote by $M_{\widetilde{\rho},\,j}$
the $(n-1)\times (n-1)$ matrix obtained from $M_{\widetilde{\rho}}$ by removing the $j$th column for any $j=1,\ldots,n$. 
Then \textbf{the twisted Alexander polynomial
of $K$ associated to $\widetilde\rho$} is defined by
\begin{equation}\label{def_tA}
\Delta_{K,\,\widetilde{\rho}}\,(t)=\frac{\det M_{\widetilde{\rho},\,j}}{\det \Phi(1-\alpha_j)},
\end{equation}
and it is well-defined up to $t^{p}$ ($p\in\mathbb{Z}$). 

If we concentrate on a boundary-parabolic representation $\rho$ and its lift $\widetilde\rho$, then $\det \Phi(1-\alpha_j)$ in (\ref{def_tA})
is always $(1-t)^2$ independent of the choice of $j$ by the following calculation: after putting 
$\widetilde{\rho}(\alpha_j)=P\left(\begin{array}{cc}1 & 0 \\1 & 1\end{array}\right)P^{-1}$ for certain invertible matrix $P$,
$$\det \Phi(1-\alpha_j)=\det(PP^{-1}-t\,P\left(\begin{array}{cc}1 & 0 \\1 & 1\end{array}\right)P^{-1})
=\det(1-t\left(\begin{array}{cc}1 & 0 \\1& 1\end{array}\right))=(1-t)^2.$$
(Even when we consider a non-boundary-parabolic representation, the value of $\det \Phi(1-\alpha_j)$ in (\ref{def_tA})
is still independent of $j$ because all arc-colors of the knot diagram are conjugate each other.)

Now we apply this calculation method to the case of $K_1\#K_2$ associated to $\widetilde{\rho_1}\#\widetilde{\rho_2}$.
For Figure \ref{pic13}(a), consider the Wirtinger presentation of $\pi_1(K_1)$ and $\pi_1(K_2)$ by
$$\pi_1(K_1)=<\alpha_1,\ldots,\alpha_l \,|\, r_1,\ldots,r_{l-1},r_l>
=<\alpha_1,\ldots,\alpha_l \,|\, r_1,\ldots,r_{l-1}>$$
and
$$\pi_1(K_2)=<\alpha_l,\ldots,\alpha_n \,|\, r_l',r_{l+1},r_{l+2},\ldots,r_n>
=<\alpha_l,\ldots,\alpha_n \,|\, r_{l+1},r_{l+2},\ldots,r_n>,$$
respectively. (In Figure \ref{pic13}, $D_1$ and $D_2$ are the diagrams of $K_1$ and $K_2$, respectively.)

\begin{figure}[h]
\centering
\subfigure[Knot diagrams $D_1$ and $D_2$]{
  \includegraphics[scale=1]{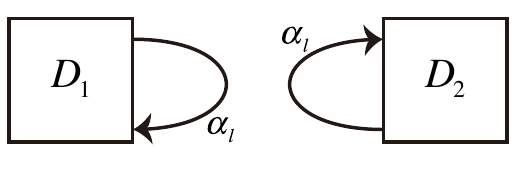}}\hspace{1.5cm}
  \subfigure[Knot diagram $D_1\#D_2$]{
  \includegraphics[scale=1]{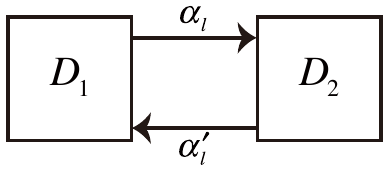} }
  \caption{Knot diagrams with some arcs}\label{pic13}
\end{figure}

\begin{lem}\label{lem_conn} In the above Wirtinger presentation of $\pi_1(K_1)$ and $\pi_1(K_2)$,
we can present $\pi_1(K_1\#K_2)$ by
$$\pi_1(K_1\#K_2)=<\alpha_1,\ldots,\alpha_n\,|\,r_1,\ldots,r_{l-1},r_{l+1},\ldots,r_n>.$$
\end{lem}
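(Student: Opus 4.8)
The plan is to read off the Wirtinger presentation directly from the connected-sum diagram $D_1\#D_2$ in Figure \ref{pic13}(b), with van Kampen's theorem as the conceptual backbone. First I would recall that the $2$-sphere realizing the connected sum meets $K_1\#K_2$ in two points, so it splits the complement into the two knot complements glued along $\mathbb{S}^2\backslash\{\text{two points}\}$, an annulus whose fundamental group is $\mathbb{Z}$, generated by a meridian. Since the connecting arc of the diagram is precisely the shared arc $\alpha_l$, this meridian is represented by $\alpha_l$ in both summands. Van Kampen's theorem then gives
\begin{equation*}
\pi_1(K_1\#K_2)=\pi_1(K_1)*_{\langle\alpha_l\rangle}\pi_1(K_2),
\end{equation*}
and, because $\alpha_l$ is the common meridian generator, amalgamation requires no relation beyond the tautology $\alpha_l=\alpha_l$.

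Next I would translate this into generators and relators. Substituting the two given Wirtinger presentations and identifying the shared generator $\alpha_l$, the amalgamated product is generated by $\alpha_1,\ldots,\alpha_n$ subject to the relators $r_1,\ldots,r_{l-1}$ coming from $K_1$ and $r_{l+1},\ldots,r_n$ coming from $K_2$. This is exactly the asserted presentation. As a diagrammatic cross-check, the crossings of $D_1\#D_2$ are those of the $D_1$-portion and those of the $D_2$-portion, since splicing along $\alpha_l$ creates no new crossing; they produce $r_1,\ldots,r_{l-1}$ and $r_{l+1},\ldots,r_n$, respectively, while the two closing relators $r_l$ and $r_l'$ of the individual summands do not appear because the strand continues through $\alpha_l$ instead of closing up.

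The step I expect to be the main obstacle is justifying the meridian identification rigorously, namely that the single generator of the amalgamating $\mathbb{Z}$ is carried to $\alpha_l$ on both sides. Concretely, one must verify that the arc $\alpha_l$ of $D_1$ and the arc $\alpha_l$ of $D_2$ are isotopic to the same meridian of the connect-sum annulus, so that no extra conjugating relation is introduced when the two presentations are merged. Once this matching is in place, the collapse of the naive generating set $\alpha_1,\ldots,\alpha_l,\alpha_l,\ldots,\alpha_n$ to $\alpha_1,\ldots,\alpha_n$ and the disappearance of both $r_l$ and $r_l'$ follow immediately from the amalgamated-product description, giving the claimed presentation with the correct deficiency one.
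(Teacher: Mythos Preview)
Your argument is correct, but it is organized differently from the paper's. You invoke van Kampen's theorem on the connect-sum sphere to obtain $\pi_1(K_1\#K_2)=\pi_1(K_1)*_{\langle\alpha_l\rangle}\pi_1(K_2)$ and then read off the presentation; the paper instead writes the Wirtinger presentation of the diagram $D_1\#D_2$ directly, with the two connecting arcs $\alpha_l$ and $\alpha_l'$ kept distinct, observes that the corresponding meridians are homotopic so that $\alpha_l'$ may be replaced by $\alpha_l$ throughout, and then drops $r_l$ and $r_l'$ as redundant. Your van Kampen approach is cleaner conceptually and handles the deficiency count automatically; the paper's approach stays entirely at the level of the diagram and makes the role of the extra arc $\alpha_l'$ explicit, which matches Figure~\ref{pic13}(b) more literally. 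One small correction to your cross-check: in the actual diagram $D_1\#D_2$ the crossings producing $r_l$ and $r_l'$ are still present, so those relators do appear in the raw Wirtinger presentation---they are then discarded because each is redundant within its summand, not because the strand ``continues through'' and avoids them. The point you flagged as the main obstacle, that the amalgamating meridian is $\alpha_l$ on both sides, is exactly what the paper disposes of in one line by noting the homotopy $\alpha_l\simeq\alpha_l'$.
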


\begin{proof}
In Figure  \ref{pic13}(b), the meridian loop corresponding to $\alpha_l$ is homotopic to that of $\alpha_l'$.
Therefore, after writing down the Wirtinger presentation of $\pi_1(K_1\#K_2)$ and substituting $\alpha_l'$ to $\alpha_l$
in all the relations, the resulting presentation is
\begin{equation}\label{rels}
  \pi_1(K_1\#K_2)=<\alpha_1,\ldots,\alpha_n\,|\,r_1,\ldots,r_{l-1},r_{l},r_{l}',r_{l+1},\ldots,r_n>.
\end{equation}
From the fact that $\alpha_l$ is homotopic to $\alpha_l'$, two relations in (\ref{rels}) are redundant, 
one from $D_1$ and another from $D_2$. After removing $r_l$ and $r_l'$, we complete the proof.

\end{proof}

\begin{cor}\label{cor5}
 For the boundary-parabolic representations $\rho_1$, $\rho_2$ and their lifts $\widetilde{\rho_1}$, $\widetilde{\rho_2}$,
the twisted Alexander polynomials satisfy
\begin{equation}\label{cor_pro}
\Delta_{K_1\#K_2,\,\widetilde{\rho_1}\#\widetilde{\rho_2}}
=(1-t)^2 \Delta_{K_1,\,\widetilde{\rho_1}} \Delta_{K_2,\,\widetilde{\rho_2}}.\end{equation}

\end{cor}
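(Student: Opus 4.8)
The plan is to compute $\Delta_{K_1\#K_2,\,\widetilde{\rho_1}\#\widetilde{\rho_2}}$ directly from the Alexander matrix built out of the presentation furnished by Lemma \ref{lem_conn}, and to show that this matrix is (up to column removal and the fixed denominator) block-triangular, so that its determinant factors as the product of the two pieces. The key structural input is that the generators $\alpha_1,\dots,\alpha_{l-1},\alpha_l$ come from $D_1$ while $\alpha_l,\alpha_{l+1},\dots,\alpha_n$ come from $D_2$, with $\alpha_l$ the single shared generator, and the relations split as $r_1,\dots,r_{l-1}$ (from $K_1$) and $r_{l+1},\dots,r_n$ (from $K_2$).

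First I would write down the $(n-1)\times n$ Alexander matrix $M:=M_{\widetilde{\rho_1}\#\widetilde{\rho_2}}$ with $(k,j)$-entry $\Phi(\partial r_k/\partial\alpha_j)$. The crucial observation is that for $k\le l-1$ the relation $r_k$ involves only the generators $\alpha_1,\dots,\alpha_l$, so $\partial r_k/\partial\alpha_j=0$ whenever $j>l$; symmetrically, for $k\ge l+1$ the relation $r_k$ involves only $\alpha_l,\dots,\alpha_n$, so $\partial r_k/\partial\alpha_j=0$ whenever $j<l$. Thus, after grouping the rows into the $K_1$-block (rows $1,\dots,l-1$) and the $K_2$-block (rows $l+1,\dots,n$) and grouping the columns into those indexed by $\alpha_1,\dots,\alpha_{l-1}$, the shared column $\alpha_l$, and those indexed by $\alpha_{l+1},\dots,\alpha_n$, the matrix $M$ has a block form in which the only overlap between the two blocks is the single column attached to $\alpha_l$. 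I would then form the square minor $M_{j}$ by deleting a column; the natural choice is to delete the shared column $j=l$, which severs the two blocks completely and leaves an $(n-1)\times(n-1)$ block-diagonal matrix whose diagonal blocks are exactly the Fox-derivative matrices of $K_1$ (in variables $\alpha_1,\dots,\alpha_{l-1}$) and of $K_2$ (in variables $\alpha_{l+1},\dots,\alpha_n$).

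Next I would take determinants. Block-diagonality gives
\begin{equation*}
\det M_{l}=\det\bigl(M^{(1)}\bigr)\cdot\det\bigl(M^{(2)}\bigr),
\end{equation*}
where $M^{(1)}$ and $M^{(2)}$ are the minors obtained from the individual Alexander matrices of $K_1$ and $K_2$ by deleting the column of the connecting generator $\alpha_l$. By the definition \eqref{def_tA} applied to each factor knot, using the result established just before Lemma \ref{lem_conn} that $\det\Phi(1-\alpha_l)=(1-t)^2$ for both $K_1$ and $K_2$, I identify $\det M^{(1)}=(1-t)^2\,\Delta_{K_1,\,\widetilde{\rho_1}}$ and $\det M^{(2)}=(1-t)^2\,\Delta_{K_2,\,\widetilde{\rho_2}}$. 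Dividing the product by the denominator $\det\Phi(1-\alpha_l)=(1-t)^2$ required for $\Delta_{K_1\#K_2}$ then yields $(1-t)^2\,\Delta_{K_1,\,\widetilde{\rho_1}}\,\Delta_{K_2,\,\widetilde{\rho_2}}$, as claimed. Here I use that the representation $\widetilde{\rho_1}\#\widetilde{\rho_2}$ restricts to $\widetilde{\rho_1}$ and $\widetilde{\rho_2}$ on the respective generator sets, so the homomorphism $\Phi$ for the connected sum agrees with the corresponding $\Phi$'s for $K_1$ and $K_2$ on each block, and the abelianization sends every meridian to $t$ consistently across the connected sum.

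The main obstacle I anticipate is bookkeeping rather than conceptual: making the block decomposition genuinely clean requires that the relation $r_l$ (and its counterpart $r_l'$) discarded in Lemma \ref{lem_conn} be precisely the redundant ones, so that no cross-terms linking $\alpha_{<l}$ to $\alpha_{>l}$ survive in the retained rows. I would verify carefully that every retained relation of $K_1$ truly omits all $\alpha_{>l}$ and every retained relation of $K_2$ truly omits all $\alpha_{<l}$ — this is where the homotopy identification $\alpha_l\sim\alpha_l'$ and the choice to delete exactly the shared column $j=l$ must be reconciled. One should also confirm that the overall well-definedness up to $t^p$ is respected, i.e. that the factor $(1-t)^2$ is not an artifact of a particular column choice; since $\det\Phi(1-\alpha_j)$ is independent of $j$ by the remark following \eqref{def_tA}, this causes no trouble, but it is worth stating explicitly.
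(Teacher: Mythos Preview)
Your argument is correct and coincides with the paper's own proof: using the presentation of Lemma~\ref{lem_conn}, one deletes the column corresponding to the shared generator $\alpha_l$, observes that the resulting square minor is block-diagonal with blocks $M_1=(\Phi(\partial r_k/\partial\alpha_j))_{k,j\le l-1}$ and $M_2=(\Phi(\partial r_k/\partial\alpha_j))_{k,j\ge l+1}$, and then factors the determinant and redistributes the denominator $(1-t)^2$. The only difference is presentational---the paper carries this out in two lines without the surrounding discussion of potential obstacles.
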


\begin{proof} Consider the Wirtinger presentations of $\pi_1(K_1)$, $\pi_1(K_2)$ and $\pi_1(K_1\#K_2)$ above.
Let $M_1$ be the $(l-1)\times(l-1)$ matrix whose $(k,j)$ component is
$$\Phi\left(\frac{\partial r_k}{\partial \alpha_j}\right)~ (k,j=1,\ldots,l-1),$$
and $M_2$ be the $(n-l)\times(n-l)$ matrix whose $(k,j)$ component is
$$\Phi\left(\frac{\partial r_k}{\partial \alpha_j}\right)~ (k,j=l+1,\ldots,n).$$
Then 
\begin{eqnarray*}
\Delta_{K_1\#K_2,\,\widetilde{\rho_1}\#\widetilde{\rho_2}}
&=&\frac{\det\left(\begin{array}{cc}M_1 & 0 \\0 & M_2\end{array}\right)}{\det \Phi(1-\alpha_j)}\\
&=&{\det \Phi(1-\alpha_j)}\frac{\det(M_1)}{\det \Phi(1-\alpha_j)}\frac{\det(M_2)}{\det \Phi(1-\alpha_j)}
=(1-t)^2\Delta_{K_1,\,\widetilde{\rho_1}}\Delta_{K_2,\,\widetilde{\rho_2}}.
\end{eqnarray*}

\end{proof}

{Remark that the natural generalization of the Alexander polynomial $\Delta_K(t)$
is to define the twisted Alexander polynomial $\Delta_{K,\,\widetilde{\rho}}'\,(t)$, using different normalization from (\ref{def_tA}), by
\begin{equation*}
\Delta_{K,\,\widetilde{\rho}}'\,(t):={\det M_{\widetilde{\rho},\,j}}=(1-t)^2\Delta_{K,\,\widetilde{\rho}}\,(t).
\end{equation*}
Then the product formula (\ref{cor_pro}) changes to 
$$\Delta_{K_1\#K_2,\,\widetilde{\rho_1}\#\widetilde{\rho_2}}'
= \Delta_{K_1,\,\widetilde{\rho_1}}' \cdot\Delta_{K_2,\,\widetilde{\rho_2}}',$$
which is a natural generalization of $\Delta_{K_1\# K_2}=\Delta_{K_1}\cdot \Delta_{K_2}$.}

Note that, for non-boundary-parabolic representations of oriented knots, Corollary \ref{cor5} still holds with slight modification.
The term $(1-t)^2$ in (\ref{cor_pro}) should be changed to $\det \Phi(1-\alpha_j)$, 
where $\alpha_j$ is the arc connecting two diagrams.
However, as shown before, choosing any arc $\alpha_k$ instead of the connecting arc $\alpha_j$ gives the same 
equation $\det \Phi(1-\alpha_k)=\det \Phi(1-\alpha_j)$.

\section{Example}\label{sec5}

\begin{figure}[h]
\centering
\includegraphics[scale=1]{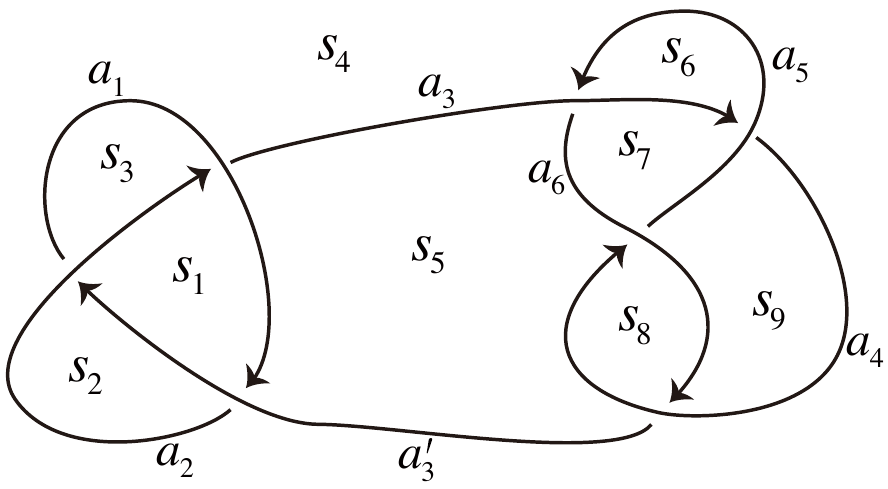}
  \caption{$3_1\#4_1$}\label{pic14}
\end{figure}

For the trefoil knot $3_1$ in the left-hand side and the figure-eight knot $4_1$ in the right-hand side of Figure \ref{pic14},
we put the boundary-parabolic representation $\rho:\pi_1(3_1\#4_1)\rightarrow{\rm PSL}(2,\mathbb{C})$
determined by the arc-colors
\begin{align*}
a_1=\left(\begin{array}{c}-1 \\1\end{array}\right),~a_2=\left(\begin{array}{c}1 \\0\end{array}\right),~
a_3=\left(\begin{array}{c}0 \\1\end{array}\right)=a_3',\\
a_4=\left(\begin{array}{c}x+1 \\x\end{array}\right),~a_5=\left(\begin{array}{c}x \\x\end{array}\right),~
a_6=\left(\begin{array}{c}x \\0\end{array}\right),
\end{align*}
where $x=\frac{-1\pm\sqrt{3}\,i}{2}$ is a solution of $x^2+x+1=0$. 
(We consider each arc-color $a_k$ is assigned to the arc $\alpha_k$.)
Let $\rho=\rho_1\#\rho_2$ for $\rho_j:\pi_1(K_j)\rightarrow{\rm PSL}(2,\mathbb{C})$ with $j=1,2$, and
$\widetilde \rho_1$, $\widetilde \rho_2$, $\widetilde \rho=\widetilde \rho_1\#\widetilde \rho_2$ be their lifts to ${\rm SL}(2,\mathbb{C})$.
If we put $s_1=\left(\begin{array}{c}2 \\1\end{array}\right)$, then all region-colors are uniquely determined by
\begin{align*}
s_1=\left(\begin{array}{c}2 \\1\end{array}\right),~s_2=\left(\begin{array}{c}2 \\3\end{array}\right),~
s_3=\left(\begin{array}{c}1 \\1\end{array}\right),~s_4=\left(\begin{array}{c}-1 \\3\end{array}\right),~
s_5=\left(\begin{array}{c}-1 \\4\end{array}\right),\\
s_6=\left(\begin{array}{c}4x+3 \\4x+7\end{array}\right),~s_7=\left(\begin{array}{c}4x+3 \\4\end{array}\right),~
s_8=\left(\begin{array}{c}4x-2 \\-x-1\end{array}\right),~s_9=\left(\begin{array}{c}3x-2 \\-x-1\end{array}\right).
\end{align*}
Note that this region-coloring satisfies Lemma \ref{lem}. If we put
$$p=\left(\begin{array}{c}1 \\2\end{array}\right),$$
then it satisfies (\ref{p}).

Let $W_1(w_1,\ldots,w_5)$ and $W_2(w_4,\ldots,w_9)$ be the potential functions of $3_1$ and $4_1$ from Figure \ref{pic14}, respectively. 
Then
\begin{align*}
W_1=\left\{-\li(\frac{w_2}{w_1})-\li(\frac{w_2}{w_4})+\li(\frac{w_2 w_3}{w_1 w_4})
+\li(\frac{w_1}{w_3})+\li(\frac{w_4}{w_3})+\log\frac{w_1}{w_3}\log\frac{w_4}{w_3}\right\}\\
+\left\{-\li(\frac{w_3}{w_1})-\li(\frac{w_3}{w_4})+\li(\frac{w_3 w_5}{w_1 w_4})
+\li(\frac{w_1}{w_5})+\li(\frac{w_4}{w_5})+\log\frac{w_1}{w_5}\log\frac{w_4}{w_5}\right\}\\
+\left\{-\li(\frac{w_5}{w_1})-\li(\frac{w_5}{w_4})+\li(\frac{w_2 w_5}{w_1 w_4})
+\li(\frac{w_1}{w_2})+\li(\frac{w_4}{w_2})+\log\frac{w_1}{w_2}\log\frac{w_4}{w_2}\right\}-\frac{\pi^2}{2},
\end{align*} 

\begin{align*}
W_2=\left\{\li(\frac{w_4}{w_5})+\li(\frac{w_4}{w_6})-\li(\frac{w_4 w_7}{w_5 w_6})
-\li(\frac{w_5}{w_7})-\li(\frac{w_6}{w_7})-\log\frac{w_5}{w_7}\log\frac{w_6}{w_7}\right\}\\
+\left\{\li(\frac{w_7}{w_6})+\li(\frac{w_7}{w_9})-\li(\frac{w_4 w_7}{w_6 w_9})
-\li(\frac{w_6}{w_4})-\li(\frac{w_9}{w_4})-\log\frac{w_6}{w_4}\log\frac{w_9}{w_4}\right\}\\
+\left\{-\li(\frac{w_5}{w_7})-\li(\frac{w_5}{w_8})+\li(\frac{w_5 w_9}{w_7 w_8})
+\li(\frac{w_7}{w_9})+\li(\frac{w_8}{w_9})+\log\frac{w_7}{w_9}\log\frac{w_8}{w_9}\right\}\\
+\left\{-\li(\frac{w_9}{w_4})-\li(\frac{w_9}{w_8})+\li(\frac{w_5 w_9}{w_4 w_8})
+\li(\frac{w_4}{w_5})+\li(\frac{w_8}{w_5})+\log\frac{w_4}{w_5}\log\frac{w_8}{w_5}\right\},
\end{align*} 
and the potential function $W(w_1,\ldots,w_9)$ of $3_1\#4_1$ from Figure \ref{pic14} is
$$W(w_1,\ldots,w_9)=W_1(w_1,\ldots,w_5)+W_2(w_4,\ldots,w_9).$$

Let 
\begin{align*}
\mathcal{I}:=\left\{\left.\exp\left(w_k\frac{\partial W}{\partial w_k}\right)=1\right|k=1,\ldots,9\right\},\\
\mathcal{I}_1:=\left\{\left.\exp\left(w_k\frac{\partial W_1}{\partial w_k}\right)=1\right|k=1,\ldots,5\right\},\\
\mathcal{I}_2:=\left\{\left.\exp\left(w_k\frac{\partial W_2}{\partial w_k}\right)=1\right|k=4,\ldots,9\right\},
\end{align*}
and define $\bold w^{(0)}:=(w_1^{(0)},\ldots,w_9^{(0)})$ using the formula (\ref{main}) as follows:
\begin{eqnarray*}
w_1^{(0)}=-3,~w_2^{(0)}=-1,~w_3^{(0)}=-1,~w_4^{(0)}=5,~w_5^{(0)}=6,\\
w_6^{(0)}=-4x+1,~w_7^{(0)}=-8x-2,~w_8^{(0)}=-9x+3,~w_9^{(0)}=-7x+3.
\end{eqnarray*}
We put $\bold w_1^{(0)}=(w_1^{(0)},\ldots,w_5^{(0)})$ and $\bold w_2^{(0)}=(w_4^{(0)},\ldots,w_9^{(0)})$.
Then $\bold w_1^{(0)}$, $\bold w_2^{(0)}$ and $\bold w^{(0)}$ are solutions of 
$\mathcal{I}_1$, $\mathcal{I}_2$ and $\mathcal{I}$, respectively. 
Furthermore, numerical calculation shows
\begin{eqnarray*}
i(\vol(\rho_1)+i\,\cs(\rho_1))\equiv (W_1)_0(\bold w_1^{(0)})&\equiv&i(0+1.6449...i)~~({\rm mod}~\pi^2),\\
i(\vol(\rho_2)+i\,\cs(\rho_2))\equiv (W_2)_0(\bold w_2^{(0)})&\equiv&
\left\{\begin{array}{ll}i(2.0299...+0\,i)&\text{ if }x=\frac{-1-\sqrt{3} \,i}{2} \\
                i(-2.0299...+0\,i)&\text{ if }x=\frac{-1+\sqrt{3}\,i}{2} \end{array}\right.
~~({\rm mod}~\pi^2),\\
\end{eqnarray*}
and
\begin{eqnarray*}
\lefteqn{i(\vol(\rho_1\#\rho_2)+i\,\cs(\rho_1\#\rho_2))\equiv W_0(\bold w^{(0)})}\\
&&\equiv\left\{\begin{array}{ll}i(2.0299...+1.6449...\,i)&\text{ if }x=\frac{-1-\sqrt{3} \,i}{2} \\
                i(-2.0299...+1.6449...\,i)&\text{ if }x=\frac{-1+\sqrt{3}\,i}{2} \end{array}\right.\\
&&\equiv(W_1)_0(\bold w_1^{(0)})+(W_2)_0(\bold w_2^{(0)})
\equiv i(\vol(\rho_1)+i\,\cs(\rho_1))+i(\vol(\rho_2)+i\,\cs(\rho_2))~~({\rm mod}~\pi^2),
\end{eqnarray*}
which confirms the additivity of the complex volume in Corollary \ref{cor1}. 

To calculate the twisted Alexander polynomials, we put the Wirtinger presentations of $3_1$, $4_1$ and $3_1\#4_1$ from
Figure \ref{pic14} by
\begin{eqnarray*}
\pi_1(3_1)&=&<\alpha_1,\alpha_2,\alpha_3\,|\,\alpha_1\alpha_2\alpha_1^{-1}\alpha_3^{-1},
\alpha_2\alpha_3\alpha_2^{-1}\alpha_1^{-1},\alpha_3\alpha_1\alpha_3^{-1}\alpha_2^{-1}>\\
&=&<\alpha_1,\alpha_2,\alpha_3\,|\,\alpha_1\alpha_2\alpha_1^{-1}\alpha_3^{-1},
\alpha_2\alpha_3\alpha_2^{-1}\alpha_1^{-1}>,\\
\pi_1(4_1)&=&<\alpha_3,\alpha_4,\alpha_5,\alpha_6\,|\,
  \alpha_3\alpha_6\alpha_3^{-1}\alpha_5^{-1},\alpha_5\alpha_4\alpha_5^{-1}\alpha_3^{-1},\alpha_6\alpha_4\alpha_6^{-1}\alpha_5^{-1}>,\\
\pi_1(3_1\#4_1)&=&<\alpha_1,\alpha_2,\alpha_3,\alpha_3',\alpha_4,\alpha_5,\alpha_6\,|\,
  \alpha_1\alpha_2\alpha_1^{-1}\alpha_3^{-1},\alpha_2\alpha_3'\alpha_2^{-1}\alpha_1^{-1},\\
  &&~~~\alpha_3'\alpha_1(\alpha_3')^{-1}\alpha_2^{-1},\alpha_3\alpha_6\alpha_3^{-1}\alpha_5^{-1},
  \alpha_5\alpha_4\alpha_5^{-1}\alpha_3^{-1},\alpha_6\alpha_4\alpha_6^{-1}\alpha_5^{-1}>,
\end{eqnarray*}
respectively. 
(If we use Lemma \ref{lem_conn}, the fundamental group $\pi_1(3_1\#4_1)$ can be expressed simply by
\begin{eqnarray*}
\pi_1(3_1\#4_1)&=&<\alpha_1,\alpha_2,\alpha_3,\alpha_4,\alpha_5,\alpha_6\,|\,
  \alpha_1\alpha_2\alpha_1^{-1}\alpha_3^{-1},\alpha_2\alpha_3\alpha_2^{-1}\alpha_1^{-1},\\
  &&~~~\alpha_3\alpha_6\alpha_3^{-1}\alpha_5^{-1},\alpha_5\alpha_4\alpha_5^{-1}\alpha_3^{-1},
  \alpha_6\alpha_4\alpha_6^{-1}\alpha_5^{-1}>.
\end{eqnarray*}
This presentation shows {(\ref{cor_pro})} trivially, so we are using the Wirtinger presentation of $\pi_1(3_1\#4_1)$ instead.)
The Alexander matrices associated to $\widetilde \rho_1$, $\widetilde \rho_2$ and $\widetilde \rho_1\#\widetilde \rho_2$
obtained by the above Wirtinger presentations are
$$
M_{\widetilde \rho_1}=\left(\begin{array}{cccccc}
  1-t & 0 & 0 & -t & -1 & 0 \\
  -t & 1-t & t & 2t & 0 & -1 \\
  -1 & 0 & 1 & t & t & -t \\
  0 & -1 & -t & 1-2t & 0 & t\end{array}\right),$$
$$M_{\widetilde \rho_2}=\left(\begin{array}{cccccccc}
  1+xt & -(x+1)t & 0 & 0 & -1 & 0 & t & 0 \\
  (x+1)t & 1-(x+2)t & 0 & 0 & 0 & -1 & t & t \\
  -1 & 0 & -xt & (x+1)t & 1-t & 0 & 0 & 0 \\
  0 & -1 & -(x+1)t & (x+2)t & -t & 1-t & 0 & 0 \\
  0 & 0 & t & (x+1)t & -1 & 0 & 1+xt & -(x+1)t \\
  0 & 0 & 0 & t & 0 & -1 & (x+1)t & 1-(x+2)t
  \end{array}\right),$$
and $M_{\widetilde \rho_1\#\widetilde \rho_2}=$
$$
\scriptsize\left(\begin{array}{cccccccccccccc}
  1-t & 0 & 0 & -t & -1 & 0 & 0 & 0 & 0 & 0 & 0 & 0 & 0 & 0\\
  -t & 1-t & t & 2t & 0 & -1 & 0 & 0 & 0 & 0 & 0 & 0 & 0 & 0\\
  
  -1 & 0 & 1 & t & 0 & 0 & t & -t & 0 & 0 & 0 & 0 & 0 & 0\\
  0 & -1 & -t & 1-2t & 0 & 0 & 0 & t & 0 & 0 & 0 & 0 & 0 & 0\\
  
  t & 0 & -1 & 0 & 0 & 0 & 1-t & t & 0 & 0 & 0 & 0 & 0 & 0\\
  t & t & 0 & -1 & 0 & 0 & 0 & 1-t & 0 & 0 & 0 & 0 & 0 & 0\\
  
  0 & 0 & 0 & 0 &   1+xt & -(x+1)t &0 & 0 & 0 & 0 & -1 & 0 & t & 0 \\
  0 & 0 & 0 & 0 &   (x+1)t & 1-(x+2)t &0 & 0 & 0 & 0 & 0 & -1 & t & t\\
  
  0 & 0 & 0 & 0 &   -1 & 0 &0 & 0 & -xt & (x+1)t & 1-t & 0 & 0 & 0 \\
  0 & 0 & 0 & 0 &   0 & -1 &0 & 0 & -(x+1)t & (x+2)t & -t & 1-t & 0 & 0 \\
  
  0 & 0 & 0 & 0 &   0 & 0 &0 & 0 & t & (x+1)t & -1 & 0 & 1+xt & -(x+1)t\\
  0 & 0 & 0 & 0 &   0 & 0 &0 & 0 & 0 & t & 0 & -1 & (x+1)t & 1-(x+2)t
\end{array}\right),$$
respectively. The corresponding twisted Alexander polynomials 
obtained by (\ref{def_tA}) are
\begin{eqnarray*}
\Delta_{3_1,\,\widetilde\rho_1}(t)&=&1+t^2,\\
\Delta_{4_1,\,\widetilde\rho_2}(t)&=&t^2(1-4t+t^2),\\
\Delta_{3_1\#4_1,\,\widetilde \rho_1\#\widetilde \rho_2}(t)&=&(1-t)^2(1+t^2)t^2(1-4t+t^2),
\end{eqnarray*}
respectively.\footnote{To calculate the determinants, final two columns of all three matrices are removed.} Therefore, we obtain
\begin{equation}\label{final}\Delta_{3_1\#4_1,\,\widetilde \rho_1\#\widetilde \rho_2}(t)
=(1-t)^2\Delta_{3_1,\,\widetilde\rho_1}(t)\Delta_{4_1,\,\widetilde\rho_2}(t),\end{equation}
which confirms Corollary \ref{cor5}.

\appendix
\section{Errata}\label{errata}

This appendix is the errata of this article. (The author appreciates Seonhwa Kim for pointing out the error.)
The author found the errors after the publication, so he wrote this errata and submitted it to the same journal again.
He sincerely apologizes to the readers for confusing them.

For given boundary-parabolic representations $\rho_j : \pi_1(K_j)\rightarrow {\rm PSL}(2,\mathbb{C})$ ($j=1,2$), the connected sum
$\rho_1\#\rho_2 : \pi_1(K_1\#K_2)\rightarrow {\rm PSL}(2,\mathbb{C})$ was defined at this article. He proved that $\rho_1\#\rho_2$ is well-defined up to conjugation at Theorem 2.2, but the statement and the proof are not correct.

As an counterexample of Theorem 2.2, consider the example of Fig 14 in Section 5.
We put
\begin{align}
a_1=\left(\begin{array}{c}-1 \\1\end{array}\right),~a_2=\left(\begin{array}{c}1 \\0\end{array}\right),~
a_3=\left(\begin{array}{c}0 \\1\end{array}\right)=a_3',\label{eq1}\\
a_4=\left(\begin{array}{c}x+1 \\x\end{array}\right),~a_5=\left(\begin{array}{c}x \\x\end{array}\right),~
a_6=\left(\begin{array}{c}x \\0\end{array}\right),\nonumber
\end{align}
but we can conjugate the figure-eight knot part by the map $*a_3:\mathcal{P}\rightarrow\mathcal{P}$.
The changed arc-colors are
\begin{align}
a_1=\left(\begin{array}{c}-1 \\1\end{array}\right),~a_2=\left(\begin{array}{c}1 \\0\end{array}\right),~
a_3=\left(\begin{array}{c}0 \\1\end{array}\right)=\left(\begin{array}{c}0 \\1\end{array}\right)*\left(\begin{array}{c}0 \\1\end{array}\right)=a_3',\label{eq2}\\
a_4=\left(\begin{array}{c}x+1 \\x\end{array}\right)*\left(\begin{array}{c}0 \\1\end{array}\right)=\left(\begin{array}{c}x+1 \\2x+1\end{array}\right)
,~a_5=\left(\begin{array}{c}x \\x\end{array}\right)*\left(\begin{array}{c}0 \\1\end{array}\right)=\left(\begin{array}{c}x \\2x\end{array}\right)
,\nonumber\\
a_6=\left(\begin{array}{c}x \\0\end{array}\right)*\left(\begin{array}{c}0 \\1\end{array}\right)=\left(\begin{array}{c}x \\x\end{array}\right)
.\nonumber
\end{align}
The representations defined by (\ref{eq1}) and (\ref{eq2}) cannot be conjugate, so the connected sum cannot be well-defined.

The error lies in the third sentence of the proof of Theorem 2.2:
``For any $b\in\mathcal{P}$, there exists unique $c\in\mathcal{P}$ such that $b*c=a$.''
The author confused that the bijectiveness of the map $*c$ implies this statement.
This statement is wrong, so all of the proof is wrong. 
(For example, in Fig 8, the second diagram is wrong. We cannot guarantee the arc-color of the small box becomes $D_1*c$.)
Therefore, Theorem 2.2 is wrong and Definition 2.1 should be modified.

One way to solve these errors is to consider the connected sum $\rho_1\#\rho_2$ not as a {\it definition}, but a method
to construct boundary-parabolic representations.
This construction does not define the unique representation, but it defines many representations.
Under this construction, Proposition 2.3 should be changed as follows.

\begin{pro}[New version of Proposition 2.3] For a boundary-parabolic representation $\rho:\pi_1(K_1\#K_2)\rightarrow {\rm PSL}(2,\mathbb{C})$, 
there exist $\rho_1:\pi_1(K_1)\rightarrow {\rm PSL}(2,\mathbb{C})$ and $\rho_2:\pi_1(K_2)\rightarrow {\rm PSL}(2,\mathbb{C})$, 
which are unique up to conjugation, such that one of $\rho_1\#\rho_2$ becomes $\rho$.
\end{pro}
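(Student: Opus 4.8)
The plan is to factor $\rho$ by \emph{restriction} to the two free factors, an operation insensitive to the gluing ambiguity that invalidated Theorem 2.2, and to extract uniqueness from the amalgamated structure $\pi_1(K_1\#K_2)=\pi_1(K_1)*_{\langle m\rangle}\pi_1(K_2)$, where the two knot groups are amalgamated along the class $m$ of the connecting meridian. The diagrammatic existence argument of Proposition \ref{pro1} survives essentially verbatim; only the uniqueness clause must be rephrased.

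For existence, I would take the standard connected-sum diagram $D_1\#D_2$ with the arc-coloring it inherits from $\rho$, as in Figure \ref{pic09}. The two arcs joining $D_1$ to $D_2$ carry homotopic meridians and hence share a single color $a\in\mathcal{P}$, so cutting along them restricts the coloring to honest arc-colorings of $D_1$ and $D_2$. These induce boundary-parabolic representations $\rho_1:\pi_1(K_1)\to{\rm PSL}(2,\mathbb{C})$ and $\rho_2:\pi_1(K_2)\to{\rm PSL}(2,\mathbb{C})$; each is boundary-parabolic because every meridian of $K_j$ is conjugate to the connecting meridian, whose $\rho$-image is parabolic. Since $\rho_1$ and $\rho_2$ already agree on the connecting arcs, the $\#$-construction performed \emph{along those arcs} reglues the two colorings into the coloring of $\rho$, exhibiting $\rho$ as one member of the family $\rho_1\#\rho_2$.

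For uniqueness, I would assume that $\rho_1',\rho_2'$ are boundary-parabolic and that some member of $\rho_1'\#\rho_2'$ is conjugate to $\rho$. The decisive point is that \emph{every} representation produced by the $\#$-construction, whichever pair of arcs of $D_1$ and $D_2$ is joined, restricts along the inclusions $\pi_1(K_j)\hookrightarrow\pi_1(K_1\#K_2)$ to conjugates of $\rho_1'$ and $\rho_2'$: the construction identifies only a single meridian of $D_1$ with a single meridian of $D_2$ and leaves each knot group otherwise untouched. Applying this to the member conjugate to $\rho$ forces $\rho_1'\cong\rho|_{\pi_1(K_1)}=\rho_1$ and $\rho_2'\cong\rho|_{\pi_1(K_2)}=\rho_2$ up to conjugation, so the pair of factors is determined by $\rho$.

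The hard part is to isolate precisely \emph{what} is unique. The failure of Theorem 2.2 shows that the gluing arc is genuine extra data, and that different choices give non-conjugate members of $\rho_1\#\rho_2$; there is therefore no hope of a single well-defined connected sum. The corrected content is the weaker true statement that the factorization data $\rho_1,\rho_2$ is recovered from $\rho$ by restriction, independently of the gluing. The one step needing care is the verification behind the ``decisive point'' above: that the connecting meridian of an arbitrary $\#$-construction is conjugate, inside each factor group, to the connecting meridian of the canonical diagram, so that all these restrictions agree up to conjugation. This rests only on the standard fact that all meridians of a knot are mutually conjugate, after which both existence and uniqueness follow as above.
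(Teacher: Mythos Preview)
Your proof is correct. For existence you and the paper both cut the connected-sum diagram along the shared arcs (Figure~\ref{pic09}). For uniqueness the routes diverge: the paper stays purely diagrammatic, invoking Figure~\ref{pic07} to observe that sliding a sub-tangle across a crossing replaces its arc-coloring by the conjugate coloring $D\mapsto D*x$, so however the connected sum was formed one can drag $D_1$ back to a reference position with its arc-colors altered only by a global conjugation---hence the extracted $\rho_1$ is determined up to conjugacy. You instead work one level up, using the amalgamated splitting $\pi_1(K_1\#K_2)=\pi_1(K_1)*_{\langle m\rangle}\pi_1(K_2)$ and noting that any member of $\rho_1'\#\rho_2'$ restricts on the factor $\pi_1(K_j)$ to a conjugate of $\rho_j'$. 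This is the same invariance expressed group-theoretically rather than move by move; what Figure~\ref{pic07} verifies locally is exactly that your restriction map is well-defined up to conjugation. Your formulation is cleaner and makes the independence from diagram choices transparent, at the price of appealing to the amalgamated structure (and implicitly to the essential uniqueness of the splitting sphere); the paper's version stays entirely inside its arc-coloring formalism and needs nothing beyond the quandle moves already in play.
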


\begin{proof} The existence is trivial from Fig 9. The uniqueness follows from Fig 7 because
the arc-color of $D$ is invariant under the moves up to conjugation.
\end{proof}

Interestingly, Section 3--4 are still true under this construction. This implies that
any representation obtained by $\rho_1\#\rho_2$ has the same complex volume 
$$(\vol(\rho_1)+i\,\cs(\rho_1))+(\vol(\rho_2)+i\,\cs(\rho_2))$$
 and the same twisted Alexander polynomial
$$
\Delta_{K_1,\,\widetilde{\rho_1}}' \cdot\Delta_{K_2,\,\widetilde{\rho_2}}'.$$

\vspace{5mm}
\begin{ack}
The author appreciates Teruaki Kitano for giving very nice introductory lectures on twisted Alexander polynomial
at Seoul National University in November, 2014. Section \ref{sec4} of this article is motived by his talk.
Also, discussions with Sungwoon Kim, Yuichi Kabaya and Hyuk Kim helped the author a lot for preparing this article.
\end{ack}

{
\begin{flushleft}
  Pohang Mathematics Institute (PMI),\\ Pohang 37673, Republic of Korea\\
  \vspace{0.4cm}
E-mail: dol0425@gmail.com\\
\end{flushleft}}

\end{document}